\documentclass[12pt]{amsart}       
\usepackage{txfonts}
\usepackage{extarrows}
\usepackage{amssymb}
\usepackage{eucal}
\usepackage{bbm}
\usepackage{graphicx}
\usepackage{amsmath}
\usepackage{amscd}
\usepackage[all]{xy}           
\usepackage{amsfonts,latexsym}
\usepackage{xspace}
\usepackage{epsfig}
\usepackage{float}
\usepackage{color}
\usepackage{shuffle}
\usepackage{fancybox}
\usepackage{colordvi}
\usepackage{multicol}
\usepackage{colordvi}
\usepackage{ifpdf}
\ifpdf
  \usepackage[colorlinks,final,backref=page,hyperindex]{hyperref}
\else
  \usepackage[colorlinks,final,backref=page,hyperindex,hypertex]{hyperref}
\fi
\usepackage[active]{srcltx} 
\usepackage{tikz}
\usepackage{graphicx}

\topmargin -.8cm \textheight 22.8cm \oddsidemargin 0cm \evensidemargin -0cm \textwidth 16.3cm

\newtheorem{theorem}{Theorem}[section]
\newtheorem{proposition}[theorem]{Proposition}
\newtheorem{lemma}[theorem]{Lemma}
\newtheorem{coro}[theorem]{Corollary}
\newtheorem{prop-def}{Proposition-Definition}[section]
\newtheorem{coro-def}{Corollary-Definition}[section]

\theoremstyle{definition}
\newtheorem{definition}[theorem]{Definition}
\newtheorem{remark}[theorem]{Remark}


\newcommand{\nc}{\newcommand}
\nc{\tred}[1]{\textcolor{red}{#1}}
\nc{\tblue}[1]{\textcolor{blue}{#1}}
\nc{\tgreen}[1]{\textcolor{green}{#1}}
\nc{\tpurple}[1]{\textcolor{purple}{#1}}
\nc{\btred}[1]{\textcolor{red}{\bf #1}}
\nc{\btblue}[1]{\textcolor{blue}{\bf #1}}
\nc{\btgreen}[1]{\textcolor{green}{\bf #1}}
\nc{\btpurple}[1]{\textcolor{purple}{\bf #1}}
\nc{\NN}{{\mathbb N}}
\nc{\ncsha}{{\mbox{\cyr X}^{\mathrm NC}}} \nc{\ncshao}{{\mbox{\cyr
X}^{\mathrm NC}_0}}


\newcommand{\delete}[1]{}

\nc{\mlabel}[1]{\label{#1}}
\nc{\mcite}[1]{\cite{#1}}
\nc{\mref}[1]{\ref{#1}}
\nc{\meqref}[1]{\eqref{#1}}
\nc{\mbibitem}[1]{\bibitem{#1}}

\delete{ Use the next lines to show names
\nc{\mlabel}[1]{\label{#1}{\hfill \hspace{1cm}{\bf{{\ }\hfill(#1)}}}}
\nc{\mcite}[1]{\cite{#1}{{\bf{{\ }(#1)}}}}
\nc{\mref}[1]{\ref{#1}{{\bf{{\ }(#1)}}}}
\nc{\meqref}[1]{\eqref{#1}{{\bf{{\ }(#1)}}}}
\nc{\mbibitem}[1]{\bibitem[\bf #1]{#1}}
}
\nc{\sha}{{\mbox{\cyr X}}}  
\newfont{\scyr}{wncyr10 scaled 550}
\nc{\ssha}{\mbox{\bf \scyr X}}
\nc{\shap}{{\mbox{\cyrs X}}} 
\nc{\shpr}{\diamond}    
\nc{\shp}{\ast} \nc{\shplus}{\shpr^+}
\nc{\shprc}{\shpr_c}    
\nc{\dep}{\mrm{dep}} \nc{\lc}{\lfloor} \nc{\rc}{\rfloor}
\nc{\db}{\leq_{\rm db}} \nc{\bfk}{{\bf k}}


\nc{\cala}{{\mathcal A}} \nc{\calb}{{\mathcal B}}
\nc{\calc}{{\mathcal C}}
\nc{\cald}{{\mathcal D}} \nc{\cale}{{\mathcal E}}
\nc{\calf}{{\mathcal F}} \nc{\calg}{{\mathcal G}}
\nc{\calh}{{\mathcal H}} \nc{\cali}{{\mathcal I}}
\nc{\call}{{\mathcal L}} \nc{\calm}{{\mathcal M}}
\nc{\caln}{{\mathcal N}} \nc{\calo}{{\mathcal O}}
\nc{\calp}{{\mathcal P}} \nc{\calr}{{\mathcal R}}
\nc{\cals}{{\mathcal S}} \nc{\calt}{{\mathcal T}}
\nc{\calu}{{\mathcal U}} \nc{\calw}{{\mathcal W}} \nc{\calk}{{\mathcal K}}
\nc{\calx}{{\mathcal X}} \nc{\CA}{\mathcal{A}}

\nc{\fraka}{{\mathfrak a}} \nc{\frakA}{{\mathfrak A}}
\nc{\frakb}{{\mathfrak b}} \nc{\frakB}{{\mathfrak B}}
\nc{\frakc}{{\mathfrak c}}
\nc{\frakD}{{\mathfrak D}} \nc{\frakF}{\mathfrak{F}}
\nc{\frakf}{{\mathfrak f}} \nc{\frakg}{{\mathfrak g}}
\nc{\frakH}{{\mathfrak H}} \nc{\frakL}{{\mathfrak L}}
\nc{\frakM}{{\mathfrak M}} \nc{\bfrakM}{\overline{\frakM}}
\nc{\frakm}{{\mathfrak m}} \nc{\frakP}{{\mathfrak P}}
\nc{\frakN}{{\mathfrak N}} \nc{\frakp}{{\mathfrak p}}
\nc{\frakS}{{\mathfrak S}} \nc{\frakT}{\mathfrak{T}}
\nc{\frakX}{{\mathfrak X}}

\font\cyr=wncyr10 \font\cyrs=wncyr7
\nc{\li}[1]{\textcolor{blue}{Nan:#1}}
\nc{\lir}[1]{\textcolor{red}{Li:#1}}
\nc{\yi}[1]{\textcolor{blue}{Yi: #1}}
\nc{\xing}[1]{\textcolor{purple}{Xing:#1}}
\nc{\revise}[1]{\textcolor{red}{#1}}

\numberwithin{equation}{section}
\nc{\etree}{1}
\nc{\RP}{{\mathcal{D}}^{\alpha}_{w}([0, T]^2, \RR^d)}
\nc{\RRP}{{\mathcal{D}}^{\alpha}([0, T]^2, \RR^d)}
\nc{\Y}{{\bf Y}}
\nc{\x}{\mathbb{X}}
\nc{\xx}{\mathcal{X}}
\nc{\HA}{\mathbb{H}}
\nc{\ha}{\mathcal{H}}
\nc{\Ha}{{\bf H}}

\nc{\RR}{\mathbb{R}} \nc{\ZZ}{\mathbb{Z}} \nc{\V}{\RR^{d}} \nc{\pro}{\otimes}
\nc{\tng}{T^{\le N}(\RR^d)^{g}} \nc{\tn}{T^{\le N}(\RR^d)}
\nc{\ttg}{T^{\le 3}(\RR^d)^{g}}
\nc{\X}{{\bf X}} \nc{\Z}{{\bf Z}}
\nc{\E}{{\bf E}}
\nc{\DA}{\mathcal{D}^{\alpha}_{w}([0,2\pi]^2, \RR^d)}
\nc{\C}{\mathcal{C}^{\alpha}}
\nc{\D}{\mathcal{D}^{\alpha}([0,T]^2, \RR^d)}
\nc{\CC}{\mathcal{C}_{\X}^{\alpha}}
\nc{\f}{\varphi}
\nc{\al}{\alpha}
\nc{\lbar}{\overline}
\nc{\Hh}{{\mathfrak{H}}}
\nc{\id}{\text{id}} \nc{\Id}{\text{Id}}
\nc{\bx}{(\delta X, \x, \xx)}
\nc{\bh}{(1, H_t^1, \ldots, H_t^N)}

\begin{document}
\title[Rough Burger-like SPDE\MakeLowercase{s}]{Rough Burger-like SPDE\MakeLowercase{s}}

%
\author{Nannan Li}
\address{School of Mathematics and Statistics, Lanzhou University
Lanzhou, 730000, China
}
\email{linn2024@lzu.edu.cn}

\author{Xing Gao$^{*}$}\thanks{*Corresponding author}
\address{School of Mathematics and Statistics, Lanzhou University
Lanzhou, 730000, China;
Gansu Provincial Research Center for Basic Disciplines of Mathematics
and Statistics, Lanzhou, 730070, China
}
\email{gaoxing@lzu.edu.cn}


\begin{abstract}
We study a class of nonlinear Burgers-type stochastic partial differential equations driven by additive space-time white noise in one spatial dimension. Building on the rough path framework initiated by Hairer, which provides a pathwise solution theory under spatial regularity $\alpha \in(\frac{1}{3}, \frac{1}{2})$, we extend this approach to the full subcritical regime $\alpha \in(0, \frac{1}{2})$.
Our main contribution is the establishment of pathwise existence and uniqueness of mild (equivalently, weak) solutions when the spatial regularity of the solution lies strictly below the classical rough path threshold. This is achieved through refined estimates for controlled rough paths, including a new upper bound for compositions with smooth functions and a scaling analysis for rough integrals against heat kernels. In particular, we extend and sharpen key analytic estimates originating from Hairer's work, incorporating refined scaling arguments that are effective in the low-regularity regime. As a result, our framework significantly enlarges the class of Burgers-type SPDEs that can be treated pathwise using rough path techniques.
\end{abstract}

\makeatletter
\@namedef{subjclassname@2020}{\textup{2020} Mathematics Subject Classification}
\makeatother
\subjclass[2020]{
60L20, 
60L50, 
60H15, 
60H17, 
}

\keywords{Burgers-like equation, stochastic partial differential equation, rough path}

\maketitle

\tableofcontents

\setcounter{section}{0}

\allowdisplaybreaks

\section{Introduction}
In this paper, we establish the existence and uniqueness of mild solutions to a class of nonlinear SPDEs of Burgers type. Our approach is developed within the framework of rough path theory, allowing us to treat the equations in a purely pathwise sense. The analysis is carried out under the assumption that the spatial regularity of the solution lies within the subcritical regime, specifically in the range $(0, \frac{1}{2})$.

\subsection{Burgers-like SPDEs}
SPDEs of Burgers type form a significant class of models in the study of nonlinear systems influenced by randomness. These equations naturally arise in a variety of scientific and engineering disciplines, including turbulence modeling, traffic flow, interface growth, and statistical mechanics. A canonical form of such an SPDE is given by
\begin{equation}
du = \left[ \partial_x^2 u + f(u) + g(u)\,\partial_x u \right]\,dt + \eta\,dW(t).
\label{eq:bspde}
\end{equation}
Here, $\eta > 0$ denotes the noise amplitude, and $(W_t)_{t\in [0,1]}$ is a standard cylindrical Wiener process on the space $L^2([0,2\pi], \mathbb{R}^d)$~\mcite{DZ92}, modeling space-time white noise. The solution $u : [0,1] \times [0,2\pi] \to \mathbb{R}^d$ is a random field subject to periodic boundary conditions in space. The nonlinear terms
\[
f: \mathbb{R}^d \to \mathbb{R}^d, \quad g: \mathbb{R}^d \to \mathcal{L}(\mathbb{R}^d, \mathbb{R}^d)
\]
are assumed to be smooth with all derivatives bounded.
Equation~\eqref{eq:bspde} can be viewed as a stochastic perturbation of a generalized form of the classical viscous Burgers equation,
a fundamental model in fluid dynamics and nonlinear wave phenomena. The addition of stochastic forcing not only introduces rich probabilistic behavior but also poses significant analytical challenges, particularly due to the irregularity induced by the noise.

One of the core difficulties in the analysis of~\eqref{eq:bspde} stems from the low spatial regularity of its solutions. This issue is already evident in the linear stochastic heat equation
\begin{equation}
dh = \partial_x^2 h\,dt + \eta\,dW(t),
\mlabel{eq:heat}
\end{equation}
whose solution $h$ is almost surely nowhere differentiable in space. As shown in~\cite{Wal86}, $h$ is almost surely $\alpha$-H\"older continuous for any $\alpha < \tfrac{1}{2}$, but not more in the sense that it is almost surely not $\tfrac{1}{2}$-H\"older continuous.  This regularity barrier prevents a direct interpretation of nonlinear expressions like $g(u)\,\partial_x u$ in the classical sense.
Earlier attempts to address this challenge relied on assuming that the function $g$ is a Jacobian, i.e., $g = DG$ for some smooth potential $G : \mathbb{R}^d \to \mathbb{R}^d$. Under this assumption, integration by parts can be used to define the nonlinear term in a weak sense~\cite{BCJ94, PDT94}. However, this requirement imposes a restrictive structural constraint that is undesirable, especially in higher-dimensional.
A major advance in overcoming this obstacle was provided by Martin Hairer in~\cite{Hair11}, who employed rough path theory to define and analyze the problematic nonlinear term. His key insight was to reinterpret the product $g(u)\,\partial_x u$ as a pathwise rough integral
\vspace{-0.05in}
\[
\int_0^{2\pi} \varphi(x)\,g(u(x))\,du(x),
\]
where $\varphi$ is a smooth test function. When $u$ is H\"older continuous with exponent $\alpha > \tfrac{1}{2}$, this integral can be defined using Young integration~\cite{You36}. However, when $\alpha \in (0, \tfrac{1}{2})$, Young's theory no longer applies, and one must appeal to the rough integral in terms of rough path theory~\cite{FV10b, Gu04, Ly98}.

In this article, we extend the conclusion of~\cite{Hair11}, developing a rough path framework capable of handling solutions with even broader spatial regularity, specifically $\alpha \in (0, \frac{1}{2})$. This requires incorporating additional stochastic cancellation effects into the analytic framework, which are not captured by classical techniques.
This development opens the door to analyzing a wider class of SPDEs with highly singular structure, and contributes to the ongoing effort of rigorously understanding nonlinear stochastic systems beyond the scope of classical techniques.
Our approach is based on lifting the solution $h$ of the linear SPDE~\eqref{eq:heat} to a Gaussian rough path. The well-posedness of this construction is supported by deep results in the theory of Gaussian rough paths~\cite{FV10a, FV10b, GOS20}, ensuring that such a lift is both meaningful and robust. This lifted path serves as a reference rough path against which nonlinearities such as $g(u)\,\partial_x u$ can be interpreted pathwise, even when $u$ itself lacks sufficient classical regularity.

\subsection{Rough paths} \mlabel{ss:rpath}
Rough path theory, introduced by Lyons~\cite{Ly98}, provides a framework for analyzing differential equations driven by signals of low regularity, particularly when classical integration theories such as It\^o or Young integration are no longer applicable. The central object of study is the rough differential equation (RDE)
\begin{equation}
\left\{
\begin{array}{ll}
dY_t &= f(Y_t)\cdot dX_t = \sum_{i=1}^{d} f_i(Y_t)\,dX_t^i, \quad t \in [0,T], \\
Y_0 &= \xi,
\end{array}
\right.
\label{eq:rde}
\end{equation}
where $X = (X^1, \ldots, X^d) : [0,T] \to \mathbb{R}^d$ is a path of H\"older regularity $\alpha \in (0,1]$, and $f_i : \mathbb{R}^m \to \mathbb{R}^m$ are smooth vector fields.
The key difficulty in interpreting~\eqref{eq:rde} lies in giving a precise meaning to the integral with respect to the rough signal $X$. This is resolved by lifting $X$ to a rough path $\mathbf{X}$, which enriches $X$ with iterated integrals that satisfy Chen's relation and H\"older-type bounds on the tensor algebra. Instead of treating $X$ as the sole driver, the RDE is reformulated as
\begin{equation}
\left\{
\begin{array}{ll}
d\mathbf{Y}_t &= f(\mathbf{Y}_t)\cdot d\mathbf{X}_t = \sum_{i=1}^{d} f_i(\mathbf{Y}_t)\,d\mathbf{X}_t^i, \quad t \in [0,T], \\
\mathbf{Y}_0 &= \xi,
\end{array}
\right.
\label{eq:rde_lifted}
\end{equation}
where $\mathbf{X}$ is the rough path lift of $X$, and $\mathbf{Y}$ is an $\mathbf{X}$-controlled path. The solution theory for~\eqref{eq:rde_lifted} crucially depends on algebraic properties such as the shuffle product and analytic bounds on the rough path lift.

The classical rough path theory is founded on the shuffle Hopf algebra $\mathcal{H}_\shuffle$ of words, which encodes iterated integrals along smooth paths. By replacing this with richer combinatorial Hopf algebras, one obtains generalized rough path frameworks adapted to more complex algebraic and analytic structures. For example, using the Butcher-Connes-Kreimer (BCK) Hopf algebra $\mathcal{H}_{{\rm BCK}}$
of rooted forests yields branched rough paths~\mcite{CK98,Gub1}. A further refinement leads to planarly branched rough paths via the Munthe-Kaas-Wright (MKW) Hopf algebra $\mathcal{H}_{{\rm MKW}}$ of planar rooted forests~\mcite{MKW}, capturing planar composition structures relevant in numerical settings.
More recently, LOT rough paths have been developed from the LOT Hopf algebra $\mathcal{H}_{{\rm LOT}}$
of multi-indices, introduced by Linares, Otto and Tempelmayr (LOT)~\mcite{BH,LOT,ZGM}. This construction provides a natural connection to regularity structures via a top-down approach rooted in multi-index analysis~\cite{BL24,LOTT}.
An overview of these rough path frameworks, along with their interconnections, is provided in~\mcite{Man25}, and summarized in the diagram below.
\begin{displaymath}
\xymatrix@C=2.2cm@R=1cm{
\mathcal{H}_{{\rm LOT}} \ar@{^(->}[r] \ar[d]_{{\bf X}_{st}^{{\rm LOT}}}& \mathcal{H}_{{\rm BCK}}  \ar@<0.1cm>@{^(->}[d]\ar@{->>}[r] \ar[ld]_{{\bf X}_{st}^{{\rm BCK}}}& \mathcal{H}_\shuffle\ar[dll]_{{\bf X}_{st}^{\shuffle}}\\
\mathbb{R}&\mathcal{H}_{{\rm MKW}}\ar@{->>}[ur]\ar[l]^{{\bf X}_{st}^{{\rm MKW}}}&
}
\end{displaymath}

Another fundamental notion accompanying the theory of rough paths is that of a controlled rough path, a concept that captures how a path can be locally expressed in terms of another, typically more irregular, reference path. The shuffle algebraic formulation of controlled rough paths was originally formulated by Gubinelli in~\cite{Gu04}. Specifically, given a rough path $\mathbf{X}$ taking values in the shuffle Hopf algebra $\mathcal{H}_\shuffle$, an $\mathbf{X}$-controlled rough path is a map $\mathbf{Y} : [0,T] \to \mathcal{H}_\shuffle$ satisfying the following relation:
\[
\langle \tau, \mathbf{Y}_t \rangle = \langle \mathbf{X}_{st} \otimes \tau, \mathbf{Y}_s \rangle + \text{small remainder},
\]
for all basis elements $\tau$ of the shuffle algebra. This identity expresses how the increment $\mathbf{Y}_{st} := \mathbf{Y}_t - \mathbf{Y}_s$ can be approximated in terms of the rough driver $\mathbf{X}_{st}$, with an error term that is small in a suitable analytic sense.
This framework was later extended beyond the shuffle setting. A branched version of controlled rough paths was developed in~\cite{Gub1} and further explored in~\cite{Kel}. More recently, a planarly branched version was introduced in~\cite{GLM24}. The most general formulation, applicable in an abstract Hopf algebraic setting, was proposed in~\cite{ZGLM25}, encompassing all previous cases and offering a unified algebraic framework for modeling controlled rough paths across various combinatorial objects.

\subsection{Comparison and outline}
The present work differs from Hairer's seminal contribution~\cite{Hair11} both in scope and in methodology. In~\cite{Hair11}, the rough path approach is developed under the assumption that the spatial regularity of the solution lies in the regime $\alpha \in(\frac{1}{3}, \frac{1}{2})$, which ensures that the nonlinear term $g(u)\,\partial_x u$ can be interpreted via a rough integral with relatively mild analytic constraints. By contrast, the analysis in this paper systematically targets the entire subcritical range $\alpha \in(0, \frac{1}{2})$, where the spatial roughness is substantially more severe and classical rough path estimates no longer suffice.

From a methodological standpoint, our approach introduces new quantitative bounds for controlled rough paths under composition with regular functions and develops a refined scaling argument for rough integrals involving heat kernels. These estimates go beyond those available in~\cite{Hair11} and are specifically designed to capture stochastic cancellation effects that become dominant at low regularity. While Hairer's framework relies on a fixed rough path lift of the linear solution and corresponding analytic bounds, our work---such as Propositions~\ref{pro:regu}, \ref{thm:inte2}, \ref{prop:sca1} and Theorem~\ref{thm:eu1}---extends this strategy by strengthening the underlying rough integral estimates, thereby enabling a robust pathwise well-posedness theory in a strictly rougher regime . In this sense, the present paper can be viewed as a genuine extension of~\cite{Hair11}, both analytically and conceptually, rather than a direct adaptation.

The structure of the paper is as follows.
In Section~\ref{ss:sec2}, we first review the fundamental notions of rough paths and controlled rough paths, along with their composition with sufficiently regular functions. We then establish an upper bound for the composition of a controlled rough path with a regular function, valid for any roughness parameter $\al \in (0, \frac{1}{2})$ (Proposition~\ref{pro:regu}). These topics form the analytical backbone of the present work.

Section~\ref{ss:sec3} is devoted to the theory of rough integrals. We recall the concept of rough integral and establish a crucial bound related to the rough integral (Proposition~\ref{thm:inte2}). Furthermore, we present an important result concerning the behavior of scaled functions under rough integration, formalized in Proposition~\ref{prop:sca1}.

In Section~\ref{ss:sec4}, we develop the pathwise theory of existence and uniqueness for Burgers-type SPDEs. Specifically, we establish that both local and global mild solutions $u$ to~\eqref{eq:bspde} exist and are unique when the spatial regularity parameter
$\beta$ lies in the range $\tred{(0, \frac{1}{2})}$, as detailed in Theorems~\ref{thm:eu1} and~\ref{thm:eu2}.
The result of Hairer~\cite[Theorem 3.6]{Hair11} serves as a corollary for us here.
This provides a foundational result for the well-posedness of the considered SPDEs in the low-regularity setting relevant to rough path analysis.

 \vskip 0.1in
{\bf Notation.}
Throughout this paper, we work over the field $\mathbb{R}$ of real numbers, which serves as the base field for all vector spaces, tensor products, algebras, coalgebras, and linear maps under consideration.
We fix two positive integers: $d$, representing the dimension of the ambient space in which rough paths take values, and $m$, denoting the dimension associated with controlled rough paths.
Let $\alpha \in (0, 1]$, and let $V$ be a Banach space. For a continuous path
\[
X: [0,T] \to V, \quad t \mapsto X_t := X(t),
\]
we define the increment of $X$ over an interval $[s,t]$ by
$
\delta X_{s,t} := X_t - X_s.
$
We denote by $\mathcal{C}^\alpha([0,T], V)$ the space of $V$-valued continuous paths on $[0,1]$ equipped with the norm
\begin{equation}
\|X\|_{\mathcal{C^\alpha}} := \|X\|_{\alpha} + \|X\|_{\infty},
\mlabel{eq:note1}
\end{equation}
where
\begin{equation}
\|X\|_{\alpha} := \sup_{s \neq t \in [0,T]} \frac{\|X_t - X_s\|_V}{|t - s|^{\alpha}}, \qquad
\|X\|_{\infty} := \sup_{t \in [0,T]} \|X_t\|_V.
\mlabel{eq:note2}
\end{equation}
This space captures paths that are both $\alpha$-H\"older continuous and bounded, which is the standard setting for rough path analysis.
%

 \vskip 0.1in
\section{An upper bound of the composition with regular functions}\label{ss:sec2}
In this section, we first recall the concepts of rough path, controlled rough path and composition of controlled rough paths with regular functions, which are the primary subjects examined in the present article. Then we obtain an upper bound of the composition of controlled rough paths with regular functions, with any roughness $\al \in(0,1]$.

\subsection{Rough paths and controlled rough paths}
Let $d\in \ZZ_{\geq 1}$ and $T^{k}(\V) := (\V)^{\otimes k}$ be the $k$-th tensor power of $\V$ for any $k\in \ZZ_{\geq 0}$, with the convention that $T^{0}(\V) := \RR$. Construct the direct sum
$$T(\V):= \bigoplus_{k\geq 0} T^{k}(\V).$$
Elements $\omega\in T^{k}(\V)$ are said to have degree $|\omega|:= k$. The space $T(\V)$ equipped respectively with the tensor product $\otimes$ and the shuffle product $\shuffle$ can be turned into the tensor algebra $(T(\V), \otimes, \etree)$ and shuffle algebra $(T(\V), \shuffle, \etree)$, where $\etree:\RR\rightarrow T(\V)$ is the unit given by $\etree(1)=1$.
Since $\V$ is of finite dimension, we can identify $T^{k}(\V)$ with its dual space for each $k\in \ZZ_{\geq 0}$.
Further, one has the (connected and graded) shuffle Hopf algebra
\begin{align*}
T(\V):= (T(\V), \shuffle, \etree, \Delta_{\otimes},\etree^*),
\end{align*}
where the coalgebra $(T(\V), \Delta_{\otimes},\etree^*)$ is obtained by taking the graded dual, equal to the finite dual in this case, of the tensor algebra $(T(\V), \otimes, \etree)$.
The graded dual $T(\V)^{g}$ of the shuffle Hopf algebra $T(\V)$ is the (connected and graded) tensor Hopf algebra
\begin{align*}
T(\V)^{g}:= (T(\V),\otimes, \etree, \Delta_{\shuffle},\etree^*),
\end{align*}
where the coalgebra $(T(\V), \Delta_{\shuffle},\etree^*)$ is from the graded dual of the shuffle algebra $(T(\V), \shuffle, \etree)$.
Here we employ the natural pairing
$$\langle\, , \rangle: T(\V)^{g} \otimes T(\V) \rightarrow \RR, \quad \omega_1\otimes \omega_2\mapsto \langle \omega_1, \omega_2\rangle:= \omega_1(\omega_2).$$

Now we consider the truncation of $T(\V)$ and $T(\V)^{g}$.
For each $N\in \ZZ_{\geq 0}$, the spaces
$$T^{\le N}(\RR^d) := \bigoplus_{0\leq k\leq N} T^k(\RR^d)=: T^{\le N}(\RR^d)^g$$
are endowed with the structure of a connected, graded, and finite-dimensional algebra and coalgebra, as described below.
On the one hand, the vector subspace
$$I_N:= \bigoplus_{k>N} T^k(\V)\leq T(\V)$$ is a graded ideal (but not a bi-ideal), hence $(T(\V), \sha, 1)/I_N$ is a graded algebra.
On the other hand, the restriction of the projection $\pi_N:T(\V)\to\hskip -9pt\to T(\V)/I_N$ to the subcoalgebra $T^{\le N}(\V)$ is an isomorphism of graded vector spaces.
The graded algebra structure of $T(\V)/I_N$ can therefore be transported to $T^{\le N}(\V)$, making  it both an algebra and a coalgebra, denoted by
$$\tn:= (T^{\le N}(\V), \shuffle, \etree, \Delta_{\otimes},\etree^*)$$
by a slight abuse of notations. Since $T^{\le N}(\V)$ is of finite dimension,
we also have the (graded) dual algebra/coalgebra
$$\tng = (T^{\le N}(\V), \otimes, \etree^*, \Delta_{\shuffle},\etree)$$ under the above pairing.

The following is the concept of a (weakly geometric) rough path.

\begin{definition}\cite{Ly98}
Let $\alpha \in(0,1]$ and $N= \lfloor \frac{1}{\alpha} \rfloor$. An $\alpha$-H\"{o}lder rough path is a map
$$\X= (X^0, X^1, \ldots, X^N): [0,T]^2\to \tng$$
such that
\begin{enumerate}
\item  (Chen's relation) \quad $\X_{s,u} \pro \X_{u,t} =\X_{s,t}$, \quad $\forall s, u, t\in \left[ 0, T\right]$.
\\
\item  $\sup\limits_{s\ne t\in [ 0,T ] } \frac{| \langle \X_{s,t},\, \omega  \rangle  |}{| t-s | ^{k\alpha  } } <\infty$,\quad $\forall s, t\in \left[ 0, T\right]$ and $\omega\in T^{k}(\V) $ with $0\leq k\leq N$.
\end{enumerate}
Further, it is called {\bf weakly geometric} if
\begin{equation}
\text{(Shuffle relation)} \quad \langle \X_{s,t}, \omega_1 \shuffle \omega_2\rangle  = \langle \X_{s,t}, \omega_1\rangle\langle \X_{s,t}, \omega_2\rangle, \quad \forall \omega_1,  \omega_2\in T^{\le N}(\V),
\mlabel{eq:shuffle}
\end{equation}
and called {\bf above a path}
$X:[0,T]\rightarrow \mathbb{R}^d$ if
\begin{equation*}
\langle \X_{s,t} , \omega \rangle= \langle X_{t}, \omega\rangle - \langle X_{s}, \omega\rangle, \quad \forall \omega\in T^{1}(\V).
\end{equation*}
In this case, we also call $X$ is lifted to $\X$.
\mlabel{def:wrp}
\end{definition}

We write $\RRP$ for the set of $\alpha$-H\"{o}lder rough paths, and $\RP$ for the set of $\alpha$-H\"{o}lder
weakly geometric rough paths.
For each $\X\in \RRP$, the zeroth component $X^{0}:[0,T]^2\rightarrow \RR$ must be the constant map with value $1$.
So for the sequence of the paper, we always write $X^0$ as 1.
For $\X\in \RRP$, define
\begin{equation}
\|\X\|_{\alpha}:=\sum_{i=1}^{N}\|X^i\|_{i\alpha}.
\mlabel{eq:note4}
\end{equation}

Here is the concept of a controlled rough path. \vspace{0.1in}

\begin{definition}~\cite{FH20,Gu04}
Let $\alpha \in(0,1]$, $N= \lfloor \frac{1}{\alpha} \rfloor$ and $\X\in \RRP $. A path $ \Y: [0, T]\to T^{\le N-1}(\RR^d)$ is called an $\X$-controlled rough path if
\begin{equation*}
\|R\Y^\omega\|_{(N-|\omega|)\alpha} < \infty,
\end{equation*}
where
\begin{equation*}
R\Y^{\omega}_{s,t}:= \langle \omega, \Y_t  \rangle- \langle \X_{s,t}\otimes \omega, \Y_s  \rangle, \quad \forall \omega\in T^{k}(\V)\,\text{ with }\, 0\leq k\leq N-1.
\end{equation*}
Further we call  $\Y$ above a path $Y:[0,T]\rightarrow \mathbb{R}$ if
$\langle \etree, \Y_{t} \rangle=Y_{t}.$
\mlabel{def:crp}
\end{definition}

The above concept in detail can be recast as follows.

\begin{remark} Let $Y:[0, T] \to \RR^m$ be a path.
 Let $\al \in(0, 1]$ and $\X= (1, X^1, \ldots, X^N)\in \RRP$. The path
$$\Y = (Y^0, \ldots, Y^{N-1}):[0, T]\to \Big(\RR^m, \mathcal{L}(\RR^d, \RR^m),\ldots, \mathcal{L}((\RR^d)^{\otimes (N-1)}, \RR^m)   \Big)$$

is an $\X$-controlled rough path above $Y$ if and only if~\cite[Definition 2.2]{BG22}
\[
\|R^i\|_{(N-i)\al}<\infty\quad \text{and} \quad Y^0=Y,
\]
where
\begin{equation}
R^i_{s, t}:=
\begin{cases}
 Y_t^i-Y_s^i-\sum_{j=1}^{N-1-i}Y^{i+j}_sX^j_{s, t},\quad i=0, \ldots, N-2;\\
Y_t^{N-1}-Y_s^{N-1},\hspace{2.42cm} i=N-1.
\end{cases}
\mlabel{eq:crp}
\end{equation}

\mlabel{rem:crp}
\end{remark}

For $\X\in \D$, the set of $\X$-controlled rough paths $\Y = (Y^0, \ldots, Y^{N-1})$ given in Remark~\ref{rem:crp} is a Banach space~\cite{BG22} under the norm
\begin{equation}
\|\Y\|_{\X,\al}:=\sum_{i=0}^{N-1}|Y_0^i|+\sum_{i=0}^{N-1}\|R^i\|_{(N-i)\al},
\mlabel{eq:norm}
\end{equation}
denoted by $\CC([0, T], \RR^m)$.

\begin{remark}
For a rough path $\X= (1, X^1, \ldots, X^N)$ above $X$, the path $X:[0, T] \to \RR^d$ can be identified to an $\X$-controlled rough path $(X,\id,0,\ldots, 0)$, as
\begin{equation*}
R^i_{s, t}:=
\begin{cases}
Y_t^0-Y_s^0-\sum_{j=1}^{N-2}Y^{1+j}_sX^j_{s, t}=X_t-X_s-\id_sX^1_{s,t}=0,\quad i=0;\\
 Y_t^i-Y_s^i-\sum_{j=1}^{N-1-i}Y^{i+j}_sX^j_{s, t}=\id_t-\id_s=0,\hspace{1.82cm} i=1, \ldots, N-2;\\
Y_t^{N-1}-Y_s^{N-1}=0,\hspace{5.68cm} i=N-1.
\end{cases}
\end{equation*}
Here $\id:[0, T] \to \mathcal{L}(\RR^d, \RR^d)$ is the constant path with value the identity map $\id$ for all times.
\end{remark}

\subsection{Composition with regular functions}
In this subsection, we first review that the composition of a controlled rough path with a regular function is still a controlled rough path. Based on this, we then give an upper bound on the norm of the newly obtained controlled rough path.
For $k\in \ZZ_{\geq 1}$, denote by
\begin{align*}
\mathcal{C}^k_b(\RR^m, \RR^n):= \{ \f:\RR^m \to \RR^n \text{ is } k \text{ times continuously differentiable
  and } \|D^jf\|_{\infty} < \infty, j=0, \ldots, k\}.
\end{align*}
Here $$D^j\varphi:\mathbb R^m\to\mathcal L\big((\RR^m)^{\otimes j},\mathbb R^n\big)$$ denotes the $j$-th differential of $\varphi$.
For each $j\in \ZZ_{\geq 1}$ and the truncated algebra $$T^{\le N}(\V)=(T(\V), \otimes , 1) /I_N,$$
the $\Big(T^{\le N}(\V)\Big)^{\otimes j}$ is also an algebra with respect to the component-wise multiplication, and there is an algebra homomorphism~\mcite{BG22}
\begin{equation*}
\delta_j: T^{\le N}(\V)\to  \Big(T^{\le N}(\V)\Big)^{\otimes j},
\end{equation*}
induced by
\begin{equation}
\begin{aligned}
\delta_j(v):=&\ v\otimes 1 \otimes \cdots \otimes 1+\dots+ 1 \otimes \cdots \otimes 1 \otimes v,  \quad \forall v\in \RR^d.
\end{aligned}
\mlabel{eq:map}
\end{equation}

The following is the concept of the composition of a controlled rough path with a regular function.
\begin{definition}~\cite[(4.2)]{BG22}
Let $\f\in \mathcal{C}^N_b(\RR^m, \RR^n)$, $\X\in \RP$ and $\Y = (Y^0, \ldots, Y^{N-1})\in \CC([0, T], \RR^m)$ above $Y$. Define a controlled rough path
$\f(\Y) = (\f(Y)^0, \ldots, \f(Y)^{N-1})\in \CC([0, T], \RR^n)$ via $\f(Y)^0_t:=\f(Y^0_t)$ and
\begin{equation}
\f(Y)^r_t:=\sum_{j=1}^{r}{1\over j!} D^j\f(Y^0_t)\Big(\sum_{i_1+\dots+i_j=r}(Y_t^{i_1}\otimes\dots \otimes Y_t^{i_j}) \circ \delta_j\mid_{(\RR^d)^{\otimes r}} \Big)\in \mathcal{L}((\RR^d)^{\otimes r}, \RR^m)
\mlabel{eq:regu}
\end{equation}
for $r=1,\ldots, N-1$ and $1\le i_1,\ldots, i_j\le N-1$. Here
\[
(\RR^d)^{\otimes r} \overset{\delta_j}{\rightarrow} \bigoplus_{i_1+\dots+i_j=r}(\RR^{d})^{\otimes i_1}\otimes \dots \otimes (\RR^{d})^{\otimes i_j} \xlongrightarrow{Y_t^{i_1}\otimes\dots \otimes Y_t^{i_j}} (\RR^m)^{\otimes j}
\xlongrightarrow{D^j\f(Y^0_t)} \RR^n.
\]
\mlabel{def:regu}
\end{definition}

Next, we establish an upper bound for the controlled rough path obtained above.

\begin{proposition}
Let $\al \in(0,1]$, $\X\in \RP$, $\Y = (Y^0, \ldots, Y^{N-1})\in \CC([0, T], \RR^m)$ and $\f(\Y) = (\f(Y)^0, \ldots, \f(Y)^{N-1})\in \CC([0, T], \RR^n)$ be given by (\ref{eq:regu}). Then
\begin{equation*}
\|\f(\Y) \|_{\X,\al}\le C_{\al, T}\Big(\sum_{i=0}^N\|D^i\f\|_{\infty}\Big)\Big(\sum_{i=1}^{N-1}\|Y^i\|_{\C}^l\Big)(1+\|\Y\|_{\X,\al})^k,
\end{equation*}
where $C_{\al, T}\in \RR$ and $l, k \in \RR_{>0}$.
\mlabel{pro:regu}
\end{proposition}

\begin{proof}
By~\cite[Theorem 4.1]{BG22}, we have
\begin{equation}
\|\f(\Y) \|_{\X,\al}\le C\Big(\sum_{i=0}^N\|D^i\f\|_{\infty}\Big)T^q\Big(\max_{1\le i\le N-1}|Y_0^i|\Big)^l\|\X \|_{\al}^r\|\Y\|_{\X,\al}^k \quad \text{for some $q, l, r, k \in \RR_{>0}$}.
\mlabel{eq:reg1}
\end{equation}
In order to reach the desired conclusion, we need to deal with $\Big(\max_{1\le i\le N-1}|Y_0^i|\Big)^l$ in (\ref{eq:reg1}). We have the following estimate
\begin{align}
\Big(\max_{1\le i\le N-1}|Y_0^i|\Big)^l\le&\ \Big(\max_{1\le i\le N-1}(|Y_t^i|+|Y_t^i-Y_0^i|)\Big)^l\nonumber\\
\le&\ \Big(\max_{1\le i\le N-1}(\|Y^i\|_{\infty}+T^\al\|Y^i\|_{\al})\Big)^l\hspace{2cm} (\text{by (\ref{eq:note2})})\nonumber\\
\le&\ \Big(\max_{1\le i\le N-1}(1+T^\al)(\|Y^i\|_{\infty}+\|Y^i\|_{\al})\Big)^l\nonumber\\
=&\ \Big((1+T^\al)\max_{1\le i\le N-1}\|Y^i\|_{\C}\Big)^l\hspace{2cm} (\text{by (\ref{eq:note1})})\nonumber\\
=&\ (1+T^\al)^l\Big(\max_{1\le i\le N-1}\|Y^i\|_{\C}\Big)^l\nonumber\\
\le&\ (1+T^\al)^l\sum_{i=1}^{N-1}\|Y^i\|_{\C}^l. \mlabel{eq:reg2}
\end{align}
Substituting (\ref{eq:reg2}) into (\ref{eq:reg1}), we obtain
\begin{align*}
\|\f(\Y) \|_{\X,\al}\le&\ C\Big(\sum_{i=0}^N\|D^i\f\|_{\infty}\Big)T^q(1+T^\al)^l\Big(\sum_{i=1}^{N-1}\|Y^i\|_{\C}^l\Big)\|\X \|_{\al}^r\|\Y\|_{\X,\al}^k\\
=&\ \Big(CT^q(1+T^\al)^l\|\X \|_{\al}^r\Big)\Big(\sum_{i=0}^N\|D^i\f\|_{\infty}\Big)\Big(\sum_{i=1}^{N-1}\|Y^i\|_{\C}^l\Big)\|\Y\|_{\X,\al}^k\\
\le&\ \Big(CT^q(1+T^\al)^l\|\X \|_{\al}^r\Big)\Big(\sum_{i=0}^N\|D^i\f\|_{\infty}\Big)\Big(\sum_{i=1}^{N-1}\|Y^i\|_{\C}^l\Big)(1+\|\Y\|_{\X,\al})^k\\
\le&\ C_{\al, T}\Big(\sum_{i=0}^N\|D^i\f\|_{\infty}\Big)\Big(\sum_{i=1}^{N-1}\|Y^i\|_{\C}^l\Big)(1+\|\Y\|_{\X,\al})^k,
\end{align*}
where the constant $C_{\al, T}:=C_{\al, T, \X}$ is dependent on $\al, T, \X$.
This completes the proof.
\end{proof}

We conclude this section with the following observation, which will be used later.

\begin{remark}
For a function $g\in \mathcal{C}^{N\al}([0, T],\RR)$ and a controlled rough path $\Y=(Y^0, \ldots, Y^{N-1})\in \CC([0, T], \RR^m)$, if we take $\f(Y):=gY$ in (\ref{eq:regu}), then we obtain a new controlled rough path
$$\f(\Y):=(gY^0, \ldots, gY^{N-1})\in \CC([0, T], \RR^m).$$
Furthermore, there exist a constant $C_{\alpha, T}\in \RR$ such that
\begin{align}
\|\f(\Y)\|_{\X,\al}
= &\ \sum_{i=0}^{N-1}|g_0Y_0^i|+\sum_{i=0}^{N-1}\|R^{gY, i}\|_{(N-i)\al} \hspace{1cm} (\text{by (\ref{eq:norm})})\nonumber\\
\le & \sum_{i=0}^{N-1}\|g\|_{\C}|Y_0^i|+\sum_{i=0}^{N-1}\|g\|_{(N-i)\al}\|R^{Y, i}\|_{(N-i)\al}\hspace{1cm} (\text{by (\ref{eq:note2})}) \nonumber\\
\le & \sum_{i=0}^{N-1}\|g\|_{\C}|Y_0^i|+\sum_{i=0}^{N-1}\|g\|_{\mathcal{C}^{(N-i)\al}}\|R^{Y, i}\|_{(N-i)\al}\nonumber\\
\le & (1+T^\al+\dots+T^{(N-1)\al})\|g\|_{\mathcal{C}^{N\al}}\Big(\sum_{i=0}^{N-1}|Y_0^i|+\sum_{i=0}^{N-1}\|R^{Y, i}\|_{(N-i)\al}\Big)\nonumber\\
= &\ (1+T^\al+\dots+T^{(N-1)\al})\|g\|_{\mathcal{C}^{N\al}}\|\Y\|_{\X,\al}\hspace{2cm} (\text{by (\ref{eq:norm})})\nonumber\\
=&: C_{\al, T}\|g \|_{\mathcal{C}^{N\al}}\|\Y\|_{\X,\al}. \mlabel{eq:regu1}
\end{align}
\end{remark}

\section{Rough integrals and scaled functions}\label{ss:sec3}
In this section, we begin by reviewing the concept of the rough integral and providing two estimates for its bounds. We then conclude with an important result concerning scaled functions.

\subsection{Rough integrals}
To set the stage, we briefly recall the concept of the rough integral and highlight some key properties needed for the upcoming analysis.

\begin{definition}\cite{BG22,Gu04}
Let $\al \in(0, 1]$, $\X = (1, X^1, \ldots, X^N)\in \D$ above $X$  and $\Z = (Z^0, \ldots, Z^{N-1})\in \CC([0, T], \mathcal{L}(\RR^d, \RR^m))$ above $Z$. Define the rough integral of $Z$ against $\X$ by
\begin{equation*}
\int_0^1 Z_r\,dX_r:=\lim_{|\pi | \to 0} \sum_{[s, t ]\in \pi}\sum_{i=0}^{N-1}Z_s^iX_{s, t}^{i+1} \in \RR^m,
\end{equation*}
where $\pi$ is an arbitrary partition of $[0, T]$. Notice that, for $i=0,\ldots, N-1$,
$$Z_t^i\in \mathcal{L}\Big((\RR^d)^{\otimes i}, \mathcal{L}(\RR^d, \RR^m)\Big)\cong \mathcal{L}\Big((\RR^d)^{\otimes (i+1)}, \RR^m\Big).$$
\mlabel{def:int}
\end{definition}
\vspace{-0.6cm}
A lemma will never hurt.

\vspace{0.3cm}

\begin{lemma}~\cite{Gu04}
Under the settting in~Definition~\mref{def:int}, we have the following estimation
\begin{equation}
\Big|\int_s^t Z_r \, dX_r-\sum_{i=0}^{N-1}Z_s^iX_{s, t}^{i+1} \Big|\le C_{\al}\|\X\|_{\al}\|\Z\|_{\X,\al}|t-s|^{(N+1)\al},
\mlabel{eq:inte1}
\end{equation}
where $C_{\al}\in \RR$.
\end{lemma}

We now state a result that bounds the norm of a path from the above rough integral.

\begin{proposition}
With the settting in~Definition~\mref{def:int},
\begin{equation*}
\Big\|\int_0^{\bullet}  Z_{0,r} \, dX_r \Big\|_{\al}\le C_{\al, T}\|\X\|_{\al}\|\Z\|_{\X,\al} ,
\end{equation*}
where $C_{\al,T}\in \RR$.
\mlabel{thm:inte2}
\end{proposition}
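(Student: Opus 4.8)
The plan is to estimate the H\"older seminorm of the path $t \mapsto \int_0^t \delta Z_{0,r}\, dX_r$ by controlling increments $\int_s^t \delta Z_{0,r}\, dX_r$ over an arbitrary interval $[s,t]$. First I would observe that $(\delta Z_{0,\bullet}, Z, Z'')$ is an $\X$-controlled rough path with exactly the same Gubinelli derivatives as $\Z = (Z, Z', Z'')$ (shifting by the constant $Z_0$ changes none of the remainders $R^1, R^2$), so $\|\delta Z_{0,\bullet}\|_{\X,\al}$ is comparable to $\|\Z\|_{\X,\al}$; this lets me apply the estimate~(\ref{eq:inte1}) of the preceding Lemma to the integrand $\delta Z_{0,\bullet}$. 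Thus
\begin{equation*}
\Big| \int_s^t \delta Z_{0,r}\, dX_r - \big( \delta Z_{0,s}\,\delta X_{s,t} + Z'_s\, \x_{s,t} + Z''_s\, \xx_{s,t} \big) \Big| \le C_\al(\|X\|_\al + \|\x\|_{2\al} + \|\xx\|_{3\al})\, \|\Z\|_{\X,\al}\, |t-s|^{4\al}.
\end{equation*}

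Next I would bound the "local term" $\delta Z_{0,s}\,\delta X_{s,t} + Z'_s\,\x_{s,t} + Z''_s\,\xx_{s,t}$. The key point is the extra H\"older factor coming from $\delta Z_{0,s}$: since $Z$ is $\al$-H\"older and $Z_0$ is the base point, $|\delta Z_{0,s}| \le \|Z\|_\al\, s^\al \le \|Z\|_\al\, T^\al$, while $|\delta X_{s,t}| \le \|X\|_\al |t-s|^\al$. So the first piece is bounded by $\|Z\|_\al\, T^\al\, \|X\|_\al\, |t-s|^\al$. The remaining two pieces are bounded by $\|Z'\|_\infty\, \|\x\|_{2\al}\, |t-s|^{2\al}$ and $\|Z''\|_\infty\, \|\xx\|_{3\al}\, |t-s|^{3\al}$ respectively. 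Combining with the $4\al$ remainder above, and using $|t-s| \le T$ to absorb all the higher powers of $|t-s|$ down to $|t-s|^\al$ (at the cost of powers of $T$), I get
\begin{equation*}
\Big| \int_s^t \delta Z_{0,r}\, dX_r \Big| \le C_{\al,T}\, \|\Z\|_{\X,\al}\, (\|X\|_\al + \|\x\|_{2\al} + \|\xx\|_{3\al})\, |t-s|^\al.
\end{equation*}
Dividing by $|t-s|^\al$ and taking the supremum over $s \ne t$ gives the claimed bound on $\|\int_0^\bullet \delta Z_{0,r}\, dX_r\|_\al$.

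I do not expect a serious obstacle here; this is essentially a bookkeeping argument combining the Lemma's estimate~(\ref{eq:inte1}) with the trivial bounds on the compensator terms. The one genuinely useful observation — the only place where the structure matters rather than brute force — is that writing the integrand as $\delta Z_{0,\bullet}$ rather than $Z$ buys an extra vanishing-at-zero factor, so that the leading $\delta Z_{0,s}\,\delta X_{s,t}$ term already carries a $|t-s|^\al$ and a bounded ($T^\al$-controlled) factor, rather than only an $\|Z\|_\infty$ which would not vanish. I would also need to be slightly careful that the constant is allowed to depend on $T$, $\|\x\|_{2\al}$, $\|\xx\|_{3\al}$ (as in Theorem~\ref{thm:regu}) — the statement already anticipates this via "$C_{\al,T} \in \RR$" — so that absorbing the higher powers of $|t-s| \le T$ and the compensator seminorms into a single constant is legitimate.
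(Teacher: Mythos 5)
Your proof is correct and follows essentially the same route as the paper's: split $\int_s^t \delta Z_{0,r}\,dX_r$ into the rough-integral remainder controlled by~(\ref{eq:inte1}) plus the local compensator, bound each compensator term separately, and divide by $|t-s|^{\al}$, absorbing higher powers of $|t-s|\le T$ into the constant. The only (cosmetic) difference is that you take the compensator $(Z'_s,Z''_s)$ and bound $|\delta Z_{0,s}|$ by $\|Z\|_{\al}T^{\al}$, whereas the paper writes $(\delta Z'_{0,s},\delta Z''_{0,s})$ and uses $|\delta Z_{0,s}|\le 2\|Z\|_{\infty}$; your choice is in fact the more natural one, since $(\delta Z_{0,\bullet},Z',Z'')$ is precisely the controlled path above $\delta Z_{0,\bullet}$ to which~(\ref{eq:inte1}) applies verbatim.
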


\begin{proof}
To prove this inequality, we have
\begin{align}
&\ \Big|\int_s^t \delta Z_{0,r} \, dX_r\Big|\nonumber\\
\le&\ \Big|\int_s^t \delta Z_{0,r} \, dX_r-\sum_{i=0}^{N-1}\delta Z_{0,s}^iX_{s, t}^{i+1}\Big|+\Big| \sum_{i=0}^{N-1}\delta Z_{0,s}^iX_{s, t}^{i+1}\Big|  \nonumber\\
\le&\  C_{\al}\|\X\|_{\al}\|\Z\|_{\X,\al}|t-s|^{(N+1)\al}+\sum_{i=0}^{N-1}|\delta Z_{0,s}^i|\,|X_{s, t}^{i+1}|\hspace{1.5cm} (\text{by (\ref{eq:inte1})})\nonumber\\
\le&\ C_{\al}\|\X\|_{\al}\|\Z\|_{\X,\al}|t-s|^{(N+1)\al}+2\sum_{i=0}^{N-1}\|Z^i\|_{\infty}|X_{s, t}^{i+1}|.\mlabel{eq:inte2}
\end{align}
This implies
\begin{align*}
&\ \Big\|\int_0^{\bullet} \delta Z_{0,r}  \, dX_r \Big\|_{\al}\\
=&\ \sup_{s\ne t\in [ 0,T ] } \dfrac{\Big|\int_s^t \delta Z_{0,r}  \, dX_r\Big|}{|t-s|^{\al}}\\
\le&\ \sup_{s\ne t\in [ 0,T ] }\frac{C_{\al}\|\X\|_{\al}\|\Z\|_{\X,\al}|t-s|^{(N+1)\al}}{|t-s|^{\al}}+2\sum_{i=0}^{N-1}\|Z^i\|_{\infty}\sup_{s\ne t\in [ 0,T ] }\frac{|X_{s, t}^{i+1}|}{|t-s|^{\al}} \hspace{2cm} (\text{by (\ref{eq:inte2})})\\
\le&\ C_{\al}T^{N\al}\|\X\|_{\al}\|\Z\|_{\X,\al}+2\sum_{i=0}^{N-1}\|Z^i\|_{\infty}T^{i\al}\|X^{i+1}\|_{(i+1)\al}\\
\le&\ C_{\al}T^{N\al}\|\X\|_{\al}\|\Z\|_{\X,\al}+2\sum_{i=0}^{N-1}C\|\Z\|_{\X,\al}T^{i\al}\|X^{i+1}\|_{(i+1)\al}\\
\le&\ C_{\al}T^{N\al}\|\X\|_{\al}\|\Z\|_{\X,\al}+2C\sum_{i=0}^{N-1}T^{i\al}\|\X\|_{\al}\|\Z\|_{\X,\al}\hspace{2cm} (\text{by (\ref{eq:note4})})\\
=&\ C_{\al, T}\|\X\|_{\al}\|\Z\|_{\X,\al} ,
\end{align*}
where $$C_{\al, T}:=C_{\al}T^{N\al}+2C\sum_{i=0}^{N-1}T^{i\al}.$$
This completes the proof.
\end{proof}

\subsection{Integration with scaled functions}
To address the nonlinear equation using the heat semigroup, we estimate a new rough integral involving an integrand multiplied by a smooth function. To this end, we recall a family of scaled functions from~\cite{Hair11}
$$ \mathcal{C}^1_1(\RR, \RR):=\{f:\RR \to \RR \mid f\, \text{ has continuous first derivative and }\,   \|f\|_{1, 1} < \infty\},$$
where
\begin{equation}
\|f\|_{1, 1}:=\sum_{n\in \mathbb{Z} }\sup_{0\le t\le 1}(|f(n+t)|+|f'(n+t)|).
\mlabel{eq:scal0}
\end{equation}


\begin{proposition}
Let $\al \in(0, 1]$, $\X = (1, X^1, \ldots, X^N)\in \D$ above $X$.
Suppose $\Z = (Z^0, \ldots, Z^{N-1})\in \CC([0, T], \mathcal{L}(\RR^d, \RR^m))$ above $Z$ and $f\in \mathcal{C}^1_1(\RR, \RR)$. Then
for each $\lambda \in \RR_{\geq 1}$,
the $\X$-controlled rough path
$$(f(\lambda \cdot) Z_{\cdot}^0, \ldots, f(\lambda \cdot) Z^{N-1}_{\cdot}) \in \CC([0, T], \mathcal{L}(\RR^d, \RR^m))$$
given by Definition~\mref{def:regu} has the estimation
\begin{equation}
\Big|\int_0^1 f(\lambda t)Z(t)\,dX(t)\Big|\le C_{\alpha, T}\lambda^{-\al}\|f\|_{1, 1}\|\Z\|_{\X,\al}\|\X\|_{\al},
\mlabel{eq:sca1}
\end{equation}
where $C_{\al, T}\in \RR$.
\mlabel{prop:sca1}
\end{proposition}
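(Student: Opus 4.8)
The plan is to exploit the scaling factor $\lambda^{-\alpha}$ by partitioning $[0,1]$ into intervals of length of order $\lambda^{-1}$, on each of which $f(\lambda t)$ varies over an interval of length $1$, so that the localized rough integral can be controlled using the bound from Proposition~\ref{thm:inte2} together with the $\|f\|_{1,1}$ norm. First I would split $[0,1]$ into $M \sim \lceil \lambda \rceil$ consecutive subintervals $[t_k, t_{k+1}]$ each of length $\le 1/\lambda$, and write
\begin{equation*}
\int_0^1 f(\lambda t) Z_t\, dX_t = \sum_{k} \int_{t_k}^{t_{k+1}} f(\lambda t) Z_t\, dX_t.
\end{equation*}
On each subinterval I would rewrite $f(\lambda t) Z_t$ as a controlled rough path: by the remark following Theorem~\ref{thm:regu} (equation~\eqref{eq:regu1}), the product $g(t) Z_t$ with $g(t) = f(\lambda t)$ is again an $\X$-controlled rough path with norm bounded by $C_{\al,T} \|g\|_{\mathcal{C}^{3\al}} \|\Z\|_{\X,\al}$ on that subinterval. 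The point is that on a subinterval of length $\le 1/\lambda$, the H\"older seminorms of $g(t) = f(\lambda t)$ scale: $\|g\|_{3\al; [t_k,t_{k+1}]} \le (1/\lambda)^{1-3\al} \lambda \|f'\|_\infty = \lambda^{3\al}\|f'\|_\infty$ — but crucially the $\sup$ over a unit-length window of $f$ appearing in $\|f\|_{1,1}$ lets us bound $\|g\|_\infty$ on $[t_k,t_{k+1}]$ by $\sup_{0\le s\le 1}|f(n_k + s)|$ for the appropriate integer $n_k$, and similarly for $g'$.

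Next I would apply Proposition~\ref{thm:inte2} (or rather its localized version, since $\int_{t_k}^{t_{k+1}} Z_r\,dX_r$ and $\int_{t_k}^{t_{k+1}}\delta Z_{t_k,r}\,dX_r$ differ by the controlled-rough-path terms at $t_k$) on each piece to get a bound of the form $\int_{t_k}^{t_{k+1}} f(\lambda t) Z_t\,dX_t \le C\, \|(gZ)\|_{\X,\al;[t_k,t_{k+1}]}\,(\|X\|_{\al}+\|\x\|_{2\al}+\|\xx\|_{3\al})\,|t_{k+1}-t_k|^{\al}$, where the extra factor $|t_{k+1}-t_k|^{\al} \le \lambda^{-\al}$ comes from evaluating the seminorm estimate over a short interval. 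Here I need to be careful that the Hölder norms of $X$, $\x$, $\xx$ restricted to the subinterval are bounded by the global ones, which is immediate. Combining, each term contributes at most $C_{\al,T}\lambda^{-\al} \big(\sup_{0\le s\le 1}(|f(n_k+s)| + |f'(n_k+s)|)\big) \|\Z\|_{\X,\al}(\|X\|_{\al}+\|\x\|_{2\al}+\|\xx\|_{3\al})$. Summing over the $M \sim \lambda$ subintervals and using that each integer $n_k$ is hit a bounded number of times, the sum over $k$ of the windowed sup-norms is bounded by $\|f\|_{1,1}$ as defined in~\eqref{eq:scal0}, yielding the claimed estimate $C_{\al,T}\lambda^{-\al}\|f\|_{1,1}\|\Z\|_{\X,\al}(\|X\|_{\al}+\|\x\|_{2\al}+\|\xx\|_{3\al})$.

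The main obstacle I anticipate is bookkeeping the seminorm scaling correctly: the $\mathcal{C}^{3\al}$-norm of $g = f(\lambda\cdot)$ on a short interval involves both a sup-norm piece (which does \emph{not} shrink and is exactly what $\|f\|_{1,1}$ is designed to absorb) and a Hölder-seminorm piece (which \emph{grows} like $\lambda^{3\al}$), and one must check that after multiplication by the $|t_{k+1}-t_k|^{3\al} \le \lambda^{-3\al}$ factor coming from $R^{gZ,1}$ being measured in the $3\al$-seminorm, these growing factors are exactly cancelled, leaving a clean $\lambda^{-\al}$ overall; the $\al$ (rather than, say, $3\al$) in the final power is dictated by the worst term, namely the $Z_s\delta X_{s,t}$ leading term whose contribution over one subinterval is of size $\|g\|_\infty \|Z\|_\infty \|X\|_\al \lambda^{-\al}$. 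A secondary technical point is passing from the global rough integral to the sum of integrals over subintervals, which is just additivity of the rough integral over adjacent intervals (a consequence of the definition as a limit of compatible Riemann sums), and handling the endpoint $1$ if $\lambda$ is not an integer, which only changes constants.
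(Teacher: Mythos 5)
Your proposal is correct and takes essentially the same approach as the paper: decompose $[0,1]$ into $\sim\lambda$ pieces of length $1/\lambda$, bound each local rough integral via the product estimate \eqref{eq:regu1} and Proposition~\ref{thm:inte2}, and sum the unit-window sup-norms of $f$ and $f'$ into $\|f\|_{1,1}$, with the H\"older scaling supplying the factor $\lambda^{-\al}$. The only difference is presentational: the paper implements the localization by reparameterizing each subinterval back to $[0,T]$, so that the powers of $\lambda$ appear in the rescaled rough-path norms ($\|X_{\lambda,k}\|_{\al}\le\lambda^{-\al}\|X\|_{\al}$, etc.) while $\|f_k\|_{\mathcal{C}^{3\al}}$ stays bounded, rather than tracking the competing factors $\lambda^{3\al}$ and $|t_{k+1}-t_k|^{4\al}$ on the short intervals directly as you do --- the same computation under a change of variables.
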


\begin{proof}
Without loss of generality, we may assume that $\lambda\in \ZZ_{\geq 1}$.
For an integer $0\leq k\le T(\lambda-1)$, we reparameterize the rough path
$\X$ to obtain a new rough path
$$\X_{\lambda, k}\in \mathcal{D}^{\alpha}\Big([-k, T\lambda-k], \RR^d\Big)$$
be setting
\begin{align*}
\X_{\lambda, k}(s, t):=&\ \Big(X^1_{\lambda, k}(s, t), \ldots, X^N_{\lambda, k}(s, t)\Big)\\
:=&\ \Big(X^1(\frac{s+k}{\lambda}, \frac{t+k}{\lambda}), \ldots, X^N(\frac{s+k}{\lambda}, \frac{t+k}{\lambda})\Big).
\end{align*}
Since $0\leq k \le T(\lambda-1)$, we have $-k\leq 0\leq T\leq T\lambda -k$ and so the rough path $\X_{\lambda, k}$ can be restricted to the interval $[0,T]$,
resulting in a new rough path
$$\X_{\lambda, k}\in \mathcal{D}^{\alpha}\Big([0, T], \RR^d\Big),$$
which we still denote as $\X_{\lambda, k}$.
Further, the path $\Z_{\lambda, k}$ given by
$$\Z_{\lambda, k}(t):=\Big(Z^0_{\lambda, k}(t), \ldots, Z^{N-1}_{\lambda, k}(t)\Big):=\Big(Z^0(\frac{t+k}{\lambda}), \ldots, Z^{N-1}(\frac{t+k}{\lambda})\Big)$$
is an $\X_{\lambda, k}$-controlled rough path in $\mathcal{C}_{\X_{\lambda, k}}^{\alpha}\Big([0, T], \mathcal{L}(\RR^d, \RR^m)\Big)$.

Now to prove~(\mref{eq:sca1}), we first have
\begin{equation}
\begin{aligned}
\|X^1_{\lambda, k}\|_{\al}=&\ \sup_{s\ne t\in [0, T] }\dfrac{|X^1_{\lambda, k}(s, t)|}{|t-s|^{\al}}
=\lambda^{-\al}\sup_{s\ne t\in [0, T] }\dfrac{|X^1(\frac{s+k}{\lambda}, \frac{t+k}{\lambda})|}{|\frac{t+k}{\lambda}-\frac{s+k}{\lambda}|^{\al}} \\
\le&\ \lambda^{-\al}\sup_{s\ne t\in [-k, T\lambda-k] }\dfrac{|X^1(\frac{s+k}{\lambda}, \frac{t+k}{\lambda})|}{|\frac{t+k}{\lambda}-\frac{s+k}{\lambda}|^{\al}} =\lambda^{-\al}\|X^1\|_{\al}.
\mlabel{eq:sca3}
\end{aligned}
\end{equation}
Similarly,
\begin{equation}
\|X^i_{\lambda, k}\|_{i\al}\le \lambda^{-i\al}\|X^i\|_{i\al}, \quad \text{for $i=2,\ldots, N$}.
\mlabel{eq:sca4}
\end{equation}
For the rough integral of the left-hand side of~(\mref{eq:sca1}),
\begin{align}
\sum_{k=0}^{\lambda-1}\int_0^1f(t+k)Z_{\lambda, k}(t)\,dX_{\lambda, k}(t)=&\ \sum_{k=0}^{\lambda-1}\int_0^1f(t+k)Z(\frac{t+k}{\lambda})\,dX(\frac{t+k}{\lambda})\nonumber\\
=&\ \sum_{k=0}^{\lambda-1}\int_{\frac{k}{\lambda}}^{\frac{1+k}{\lambda}}f(\lambda u)Z(u)\,dX(u)\hspace{1cm} (\text{by setting $u:=\frac{t+k}{\lambda}$})\nonumber\\
=&\ \int_0^1f(\lambda u)Z(u)\,dX(u)\nonumber\\
=&\ \int_0^1f(\lambda t)Z(t)\,dX(t).\mlabel{eq:scal2}
\end{align}
Setting $f_k(t):=f(t+k)$ with $t\in [0, 1]$,
\begin{align}
\sum_{k=0}^{\lambda-1}\|f_k\|_{\mathcal{C}^{N\al}}=&\ \sum_{k=0}^{\lambda-1}(\|f_k\|_{N\al}+\|f_k\|_{\infty}) \hspace{2cm} (\text{by (\ref{eq:note2})})\nonumber\\
=&\ \sum_{k=0}^{\lambda-1}\Big(\sup_{s\ne t\in [0, 1] }\dfrac{|f_k(t)-f_k(s)|}{|t-s|^{N\al}}+\sup_{ t\in [0, 1] }|f_k(t)|  \Big)\nonumber\\
=&\ \sum_{k=0}^{\lambda-1}\Big(\sup_{s\ne t\in [0, 1] }\dfrac{|f_k(t)-f_k(s)|}{|t-s|}|t-s|^{1-N\al}+\sup_{ t\in [0, 1] }|f_k(t)|  \Big)\nonumber\\
\le&\ \sum_{k=0}^{\lambda-1}\Big(\sup_{s\ne t\in [0, 1] }\dfrac{|f_k(t)-f_k(s)|}{|t-s|}+\sup_{ t\in [0, 1] }|f_k(t)|  \Big) \hspace{2cm} (\text{by $1-N\al\geq 0$})\nonumber \\
=&\ \sum_{k=0}^{\lambda-1}\Big(\sup_{u\in [0, 1] }|f'_k(u)|+\sup_{ t\in [0, 1] }|f_k(t)|  \Big) \hspace{2cm} (\text{by $f_k\in \mathcal{C}^1_1(\RR,\RR)$  }  )\nonumber\\
\le&\ \sum_{k=0}^{\lambda-1}\Big(\sup_{u\in [0, 1] }(|f'_k(u)|+|f_k(u)|)+\sup_{ t\in [0, 1] }(|f'_k(t)|+|f_k(t)|)  \Big)\nonumber\\
=&\ 2\sum_{k=0}^{\lambda-1}\sup_{ t\in [0, 1] }(|f'_k(t)|+|f_k(t)|)\nonumber\\
\le&\ 2\|f\|_{1, 1}<\infty \hspace{2cm} (\text{by (\ref{eq:scal0})}).\mlabel{eq:sca6}
\end{align}
Combining the above bounds, we conclude
\begin{align*}
&\ \Big|\int_0^1 f(\lambda t)Z(t)\,dX(t)\Big|\\
=&\ \Big|\sum_{k=0}^{\lambda-1}\int_0^1f(t+k)Z_{\lambda, k}(t)\,dX_{\lambda, k}(t)\Big|\hspace{2cm} (\text{by (\ref{eq:scal2})})\\
\le&\ \sum_{k=0}^{\lambda-1} \Big|\int_0^1f(t+k)Z_{\lambda, k}(t)\,dX_{\lambda, k}(t)\Big| \\
=&\ \sum_{k=0}^{\lambda-1} {\Big|\int_0^1f(t+k)Z_{\lambda, k}(t)\,dX_{\lambda, k}(t) - \int_0^0 f(t+k)Z_{\lambda, k}(t)\,dX_{\lambda, k}(t) \Big|\over 1^{\al}} \\
\le&\ \sum_{k=0}^{\lambda-1}C_{\alpha, T}\|f_k\Z_{\lambda, k}\|_{\X_{\lambda, k},\al}\|\X_{\lambda, k}\|_{\al}
\hspace{2cm} (\text{by Proposition~\ref{thm:inte2}})\\
\le&\ \sum_{k=0}^{\lambda-1}C_{\alpha, T}\|f_k\|_{\mathcal{C}^{N\al}}\|\Z_{\lambda, k}\|_{\X_{\lambda, k},\al}\|\X_{\lambda, k}\|_{\al} \hspace{2cm} (\text{by (\ref{eq:regu1})})\\
\le&\ \sum_{k=0}^{\lambda-1}C_{\alpha, T}\|f_k\|_{\mathcal{C}^{N\al}}\|\Z\|_{\X,\al}(\lambda^{-\al}\|X^1\|_{\al}+\dots+\lambda^{-N\al}\|X^N\|_{N\al})\hspace{1cm}(\text{by (\ref{eq:sca3}) and (\ref{eq:sca4})})\\
=&\ C_{\alpha, T}\lambda^{-\al}\Big(\sum_{k=0}^{\lambda-1}\|f_k\|_{\mathcal{C}^{N\al}}\Big)\|\Z\|_{\X,\al}(\|X^1\|_{\al}+\dots+\lambda^{-(N-1)\al}\|X^N\|_{N\al})\\
\le&\ 2C_{\alpha, T}\lambda^{-\al}\|f\|_{1, 1}\|\Z\|_{\X,\al}(\|X^1\|_{\al}+\dots+\lambda^{-(N-1)\al}\|X^N\|_{N\al})\hspace{1cm}(\text{by (\ref{eq:sca6})})\\
\le&\ C_{\alpha, T}\lambda^{-\al}\|f\|_{1, 1}\|\Z\|_{\X,\al}\|\X\|_{\al}
 \hspace{0.5cm}(\text{by $\lambda^{-\al},\dots, \lambda^{-(N-1)\al}\le 1 $ from $\lambda \in \ZZ_{\geq 1}$}).
\end{align*}
This completes the proof.
\end{proof}

\section{Well-posedness}\label{ss:sec4}
In this section, we skillfully reformulate the linear stochastic heat equation~(\ref{eq:heat}) as
\begin{equation}
dh = (\partial_x^2 - 1)h dt + \eta dW(t),\quad \forall t\in [0,1], \, x\in [0, 2\pi]. \mlabel{eq:spde1}
\end{equation}
Here $(W_t)_{t\in [0,1]}$ denotes a standard cylindrical Wiener process on $L^2([0,2\pi], \mathbb{R}^d)$,
which serves as a model for space-time white noise.
Based on this formulation, we first lift the stationary solution of~(\ref{eq:spde1}) to a rough path and then provide a rigorous interpretation of~(\ref{eq:bspde}) within the rough path framework. All results in this section are understood in the pathwise sense.

\subsection{Gaussian rough paths and definition of solutions}
Let $\al \in (0, \frac{1}{2})$ and fix $t\in [0,1]$.
According to~\cite[Lemma 3.1]{Hair11}, the stochastic process $h_t:[0, 2\pi] \to \RR^d$, defined by~(\ref{eq:spde1}), is a centered Gaussian process whose covariance function has finite 1-variation.
Moreover, by~\cite[Proposition 2.5]{GOS20}, the path $H_t:=h_t$ admits a canonical lift to an
$\al$-H\"{o}lder weakly geometric rough path $$\Ha_t:=\bh\in \DA.$$
Here canonical means that the iterated integrals $H_t^i$, $1\leq i\leq N$, are defined by
\[H_t^i(x, y):=\lim_{\varepsilon\to 0}\sum_{1\le j_1, \ldots, j_n\le d}\left(\int_{x<x_{j_1}<\cdots<x_{j_n}<y}dH_t^{\varepsilon}(x_{j_1})\cdots dH_t^{\varepsilon}(x_{j_n})\right) e_{j_1}\otimes \cdots \otimes e_{j_n},\]
where $H_t^{\varepsilon}$ is any suitable sequence of smooth approximations to $H_t$ and $\{e_1,\ldots, e_d\}$ denotes the canonical basis of $\RR^d$.
For consistency with the notation in Section~\mref{ss:sec2}, we use $H_t$ to denote $h_t$ in this context.
The following is the concept of weak solution to (\ref{eq:bspde}).

\begin{definition}~\cite[Definition 3.2]{Hair11}
A continuous stochastic process $u: [0,1] \times [0,2\pi] \to \mathbb{R}^d $ is called a {\bf weak solution} to~(\ref{eq:bspde}) if the following conditions are satisfied:
\begin{enumerate}
\item
The process $v := u - h $ belongs to
\[
\mathcal{C}\Big([0,1], \mathcal{C}([0,2\pi], \mathbb{R}^d)\Big) \cap L^1\Big([0,1], \mathcal{C}^1([0,2\pi], \mathbb{R}^d)\Big).
\]

\item
For every smooth periodic test function $\varphi: [0,2\pi] \to \mathbb{R}$, the following identity holds almost surely:
\begin{align}
\langle v_t, \varphi \rangle &= \langle u_0 - h_0, \varphi \rangle + \int_0^t \langle (\partial_x^2 - 1)\varphi, v_s \rangle \, ds + \int_0^t \langle \varphi, g(u_s)\partial_x v_s \rangle \, ds \nonumber \\
&\quad + \int_0^t \int_0^{2\pi} \varphi(x) g\big(v_s(x) + H_s(x)\big) \, dH_s(x) \, ds + \int_0^t \langle \varphi, \hat{f}(u_s) \rangle \, ds,
\mlabel{eq:solu1}
\end{align}
where $\hat{f}(u_s) := f(u_s) + u_s $.
\end{enumerate}
\mlabel{def:solu1}
\end{definition}

\begin{remark}
For the double integral
$$ \int_0^t\int_0^{2\pi} \varphi(x)g(v_s(x)+H_s(x))d\,H_s(x)ds,$$
the inner integral is well-defined as a rough integral.
Moreover, the outer integral is also well-defined because the map
$$ s\mapsto \int_0^{2\pi} \varphi(x)g\Big(v_s(x)+H_s(x) \Big)d\,H_s(x)$$
is continuous with respect to $s$.
\mlabel{rk:roughint}
\end{remark}

The weak solution introduced in Definition~\ref{def:solu1} admits an equivalent formulation in the mild sense~\cite[Proposition 3.5]{Hair11}. Let $(S_t)_{t\in [0,1]}$
denote the heat semigroup on $[0,2\pi]$ subject to periodic boundary conditions. The associated heat kernel $p_t:[0, 2\pi] \to \RR$
is defined as the unique $2\pi$-periodic function such that, for every continuous function $u_0:[0, 2\pi]\to \RR^d$,
the semigroup action is given by
\begin{equation*}
(S_t u_0)(x)=\int_0^{2\pi}p_t(x-y)u_0(y)dy.
\end{equation*}
We now proceed to formulate the corresponding notion of a mild solution to~(\ref{eq:bspde}).

\begin{definition}\cite{Hair11}
The process
$$v:=u-h \in \mathcal{C}\Big([0, 1], \mathcal{C}([0,2\pi], \RR^d)\Big) \cap L^1\Big([0, 1],\mathcal{C}^1([0,2\pi], \RR^d) \Big)$$
is called {\bf a mild solution} to~(\ref{eq:bspde}) if $v$ satisfies the same conditions as in Definition~\ref{def:solu1}, but with~(\ref{eq:solu1}) replaced by the identity
\begin{align}
v_t(x)=&\ \big(S_t(u_0-h_0)\big)(x)+\int_0^t \big(S_{t-s}(g(u_s)\partial_xv_s+\hat{f}(u_s))\big)(x)ds \nonumber\\
&\ +\int_0^t\int_0^{2\pi} p_{t-s}(x-y)g(u(s, y))d\,H_s(y)ds.
\mlabel{eq:solu2}
\end{align}
\end{definition}

\subsection{Existence and uniqueness}
The following result provides an upper bound, which is a straightforward modification of~\cite[Lemma 3.8]{Hair11}.

\begin{lemma}
For any $s\in [0, 1]$, let $\al \in (0, \frac{1}{2})$, $\Ha_s \in \DA$ above $H_s$ and
$$\Z = (Z^0, \ldots, Z^{N-1})\in \mathcal{C}_{\Ha_s}^{\alpha}([0, 2\pi], \mathcal{L}(\RR^d, \RR^m))$$
above $Z$. Then for each $s,t\in [0,1]$,
\begin{equation*}
\Big|\int_0^{2\pi} \partial_xp_t(x-y)Z_y\,d\,H_s(y)\Big|\le C_{\alpha}t^{{\al \over 2}-1}\|\Z\|_{\Ha_s,\al}\|\Ha_s\|_{\al}
\end{equation*}
uniformly for $x\in [0, 2\pi]$ , where $C_{\alpha}\in \RR$ is independent of $H$ and $Z$.
\mlabel{lem:ex}
\end{lemma}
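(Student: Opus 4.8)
The plan is to apply Proposition~\ref{prop:sca1} with a carefully chosen scaling parameter $\lambda$ and a scaled function $f$ built from the derivative of the heat kernel, exploiting the fact that $\partial_x p_t$ concentrates at scale $\sqrt{t}$. Recall that the periodic heat kernel $p_t$ behaves, up to the periodization, like the Gaussian kernel $(4\pi t)^{-1/2}\exp(-|z|^2/4t)$, so that $\partial_x p_t(z)$ is comparable to $t^{-1}$ times a fixed profile evaluated at $z/\sqrt{t}$. Concretely, I would write $\partial_x p_t(z) = t^{-1} q(z/\sqrt{t})$ for a smooth, rapidly decaying profile $q$ (obtained by periodizing the Gaussian derivative), and then set $\lambda := t^{-1/2}$ (assuming $\lambda \ge 1$, i.e. $t \le 1$; the case of larger $t$ is trivial since the kernel is then uniformly bounded and smooth). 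The integrand $\partial_x p_t(x-y) Z_y$ becomes $t^{-1} q\big((x-y)/\sqrt t\big) Z_y = \lambda^2 q(\lambda(x-y)) Z_y$, so that, taking $f(\cdot) := q(x - \cdot)$ reparametrized appropriately (a translate of $q$, which lies in $\mathcal{C}^1_1(\RR,\RR)$ with $\|f\|_{1,1}$ bounded uniformly in $x$ because $q$ and $q'$ decay rapidly), we land precisely in the setting of Proposition~\ref{prop:sca1}.

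The main steps, in order, are: (1) reduce to $t \in (0,1]$ and record the scaling identity $\partial_x p_t(z) = t^{-1} q(z/\sqrt t)$ together with the uniform bound $\sup_x \|q(x-\cdot)\|_{1,1} =: M < \infty$, which follows from the Gaussian (and periodized-Gaussian) tail estimates on $q, q'$; (2) rewrite $\int_0^{2\pi}\partial_x p_t(x-y) Z_y\, d H_s(y) = \lambda^2 \int_0^{2\pi} f(\lambda y)\, Z_y\, d H_s(y)$ with $\lambda = t^{-1/2}$ and $f = q(x - \cdot)$, after shifting the integration variable and, if needed, rescaling the domain $[0,2\pi]$ to $[0,1]$ (which only affects constants and the $T$-dependence, here $T = 2\pi$); (3) invoke Proposition~\ref{prop:sca1} to bound this by $\lambda^2 \cdot C_{\al} \lambda^{-\al} \|f\|_{1,1}\|\Z\|_{\Ha_s,\al}\big(\|H_s\|_{\al} + \|\HA_s\|_{2\al} + \|\ha_s\|_{3\al}\big)$; (4) substitute $\lambda^2 \lambda^{-\al} = t^{-1}t^{\al/2} = t^{\al/2 - 1}$ and absorb $\|f\|_{1,1} \le M$ into the constant $C_\al$, yielding the claimed estimate uniformly in $x$.

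The technical heart of the argument is Step~(1): one must genuinely verify that the periodic heat kernel derivative admits the self-similar decomposition with a profile $q$ whose $\mathcal{C}^1_1$-norm of translates is bounded uniformly. For the non-periodic Gaussian this is immediate from $\partial_z\big[(4\pi t)^{-1/2}e^{-z^2/4t}\big] = t^{-1}\cdot\big(-\tfrac{z}{2\sqrt t}\big)(4\pi)^{-1/2}e^{-z^2/(4t)}\cdot t^{-1/2}$, so $q(w) = -(4\pi)^{-1/2}\tfrac{w}{2}e^{-w^2/4}$, which is Schwartz and hence satisfies $\|q(x-\cdot)\|_{1,1} \le M$ uniformly. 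For the $2\pi$-periodic kernel, $p_t(z) = \sum_{n\in\ZZ} (4\pi t)^{-1/2} e^{-(z+2\pi n)^2/4t}$, the same scaling holds with $q$ replaced by its periodization at scale $\lambda$; the extra periodic copies are controlled because, for $\lambda \ge 1$, the shifted profiles $q(\lambda(z + 2\pi n))$ are summable in $n$ with a bound independent of $\lambda$ — this is exactly the kind of summation already performed in~\eqref{eq:sca6}, and it is where one uses that $f \in \mathcal{C}^1_1$ tolerates a sum over integer translates. A minor subsidiary point is handling the regime $t \ge 1$ (or $\lambda < 1$): there $p_t$ and $\partial_x p_t$ are smooth and bounded with all derivatives bounded uniformly, and $t^{\al/2 - 1}$ is bounded below on $[1,\infty)$ by a positive constant only if one works on a bounded time interval — since $t \in [0,1]$ here this regime does not arise, but it is worth a one-line remark that the exponent $\al/2 - 1 < 0$ makes the bound most delicate as $t \to 0$, which is precisely the regime the scaling argument is designed for.
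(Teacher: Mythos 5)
Your proposal follows essentially the same route as the paper: write $\partial_x p_t(x-y)=t^{-1}f_t\bigl((x-y)/\sqrt{t}\bigr)$ for a (periodized, $t$-dependent) profile with $\sup_{t\in(0,1]}\|f_t\|_{1,1}<\infty$, and apply Proposition~\ref{prop:sca1} with $\lambda=t^{-1/2}\ge 1$ to get $\lambda^{2}\cdot\lambda^{-\al}=t^{\al/2-1}$. The only cosmetic difference is that the paper imports the uniform $\|f_t\|_{1,1}$ bound from Hairer's Lemma~3.8 rather than re-deriving it from Gaussian tails, and your explicit remark that the profile must be translated by $\lambda x$ (with $\|\cdot\|_{1,1}$ stable under translation) is a point the paper glosses over; the argument is correct.
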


\begin{proof}
Notice that
\begin{align*}
\partial_xp_t(x)=&\ -\sum_{n\in \mathbb{Z}} {x\over \sqrt{2\pi}t^{3/2}}\exp\Big(-{(x-2\pi n)^2\over 2t}  \Big)   \\
=&\ -\sum_{n\in \mathbb{Z}}{1\over \sqrt{2\pi}t}{x\over \sqrt{t}}\exp\Big\{-{1\over 2}\Big({x\over \sqrt{t}}-{2\pi n\over \sqrt{t}}\Big)^2\Big\}.
\end{align*}
Let
\begin{equation*}
f_t:\RR\to \RR, \quad y \mapsto -\sum_{n\in \mathbb{Z}}{1\over \sqrt{2\pi}}y\cdot \exp\Big(-{1\over 2}(y-{2\pi n\over \sqrt{t}})^2\Big).
\end{equation*}
Then
\begin{equation}
\partial_xp_t(x)={1\over t}f_t\Big({x\over \sqrt{t}}\Big).
\mlabel{eq:ex1}
\end{equation}
It follows from~\cite[Lemma~3.8]{Hair11} that $\sup_{t\in (0, 1]}\|f_t\|_{1, 1}< \infty$. Hence
\begin{align*}
&\ \Big|\int_0^{2\pi} \partial_xp_t(x-y)Z_y\,d\,H_s(y)\Big| \\
=&\ \Big|\int_0^{2\pi} {1\over t}f_t\Big({x-y\over \sqrt{t}}\Big)Z_y\,d\,H_s(y)\Big|\hspace{2cm} (\text{by (\ref{eq:ex1})})\\
%
%
\le&\ {1\over t}C_{\al}\Big({1\over\sqrt{t}}\Big)^{-\al}\|f_t\|_{1, 1}\|\Z\|_{\Ha_s,\al}\|\Ha_s\|_{\al}\hspace{1.5cm} (\text{by (\ref{eq:sca1})})\\
\le&\ C_{\al}\Big(\sup_{t\in (0, 1]}\|f_t\|_{1, 1}\Big)t^{{\al \over 2}-1}\|\Z\|_{\Ha_s,\al}\|\Ha_s\|_{\al}\\
=:&\ C_{\al}t^{{\al \over 2}-1}\|\Z\|_{\Ha_s,\al}\|\Ha_s\|_{\al}.
\end{align*}
This completes the proof.
\end{proof}

We are now in a position to present one of the central results of this section, which asserts the existence and uniqueness of local solutions in the pathwise sense.

\begin{theorem}
Let $\beta \in (0, \frac{1}{2}) $ and $u_0\in \mathcal{C}^{\beta}([0,2\pi],\RR^d)$.
Then, for almost every realization of the driving process $H$ and a time $T\in (0,1]$ to be small enough,
the equation~(\ref{eq:bspde})
admits a unique mild solution $u$ in $\mathcal{C}\Big([0, T],\mathcal{C}^{\beta}([0,2\pi],\RR^d)\Big).$
\mlabel{thm:eu1}
\end{theorem}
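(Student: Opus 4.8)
The plan is to recast the mild-solution identity \eqref{eq:solu2} as a fixed-point problem on the Banach space $\mathcal{E}_T := \mathcal{C}\big([0,T],\mathcal{C}^{\beta}([0,2\pi],\RR^d)\big)$ for the unknown $v = u - h$, and to show that the associated solution map $\mathcal{M}$ is a contraction on a suitable closed ball once $T$ is small. Concretely, for $v \in \mathcal{E}_T$ one sets $u_s := v_s + h_s$ and defines $(\mathcal{M}v)_t(x)$ to be the right-hand side of \eqref{eq:solu2}, namely the sum of the semigroup term $\big(S_t(u_0 - h_0)\big)(x)$, the drift term $\int_0^t \big(S_{t-s}(g(u_s)\partial_x v_s + \hat f(u_s))\big)(x)\,ds$, and the rough-integral term $\int_0^t \int_0^{2\pi} p_{t-s}(x-y) g(u(s,y))\, dH_s(y)\, ds$. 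The first step is to verify that $\mathcal{M}$ maps $\mathcal{E}_T$ into itself: the semigroup term lies in $\mathcal{C}\big([0,T],\mathcal{C}^{\beta}\big)$ by the standard smoothing estimate $\|S_t w\|_{\mathcal{C}^{\beta}} \lesssim \|w\|_{\mathcal{C}^{\beta}}$ and strong continuity, and the drift term gains regularity because $S_{t-s}$ maps $\mathcal{C}^0$ into $\mathcal{C}^{\beta}$ with norm $\lesssim (t-s)^{-\beta/2}$, which is integrable in $s$; here one uses that $g$ and $\hat f$ are bounded with bounded derivatives, so $g(u_s)\partial_x v_s + \hat f(u_s)$ is controlled in $\mathcal{C}^0$ by a polynomial in $\|v\|_{\mathcal{E}_T}$ (note that $\partial_x v_s$ appears, so one works with the $L^1\big([0,T],\mathcal{C}^1\big)$ component exactly as in Definition~\ref{def:solu1}, and the regularizing bound $(t-s)^{-\beta/2 - 1/2}$ stays integrable since $\beta < 1$).

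The crucial term is the double integral, and this is where Lemma~\ref{lem:ex} and the scaling Proposition~\ref{prop:sca1} do the work. For fixed $s$, the integrand $y \mapsto g(u(s,y)) = g\big(v_s(y) + H_s(y)\big)$ must first be realized as an $\Ha_s$-controlled rough path in $\mathcal{C}_{\Ha_s}^{\alpha}\big([0,2\pi],\mathcal{L}(\RR^d,\RR^d)\big)$: one takes the controlled rough path $(H_s, \id, 0)$ above $H_s$, adds the genuinely smooth (in space) path $v_s$ — which, being $\mathcal{C}^1$ in $x$, is trivially controlled with derivative processes given by its spatial derivative and zero — to get a controlled rough path above $u_s = v_s + H_s$, and then composes with $g \in \mathcal{C}^3_b$ using Definition~\ref{def:regu}, whose norm is bounded by Theorem~\ref{thm:regu}. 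Then Lemma~\ref{lem:ex} gives $\big|\int_0^{2\pi} \partial_x p_{t-s}(x-y) g(u(s,y))\, dH_s(y)\big| \lesssim (t-s)^{\alpha/2 - 1}\|g(u_s)\|_{\Ha_s,\alpha}\big(\|H_s\|_\alpha + \|\HA_s\|_{2\alpha} + \|\ha_s\|_{3\alpha}\big)$, and since $\alpha/2 - 1 > -1$ the time integral converges, with a prefactor $T^{\alpha/2}$ that shrinks with $T$; to control the $\mathcal{C}^{\beta}$-norm in $x$ (not just the sup norm) one takes a spatial increment and uses the analogous bound with an extra factor from the heat-kernel gradient, which costs a further power $(t-s)^{-\beta/2}$ that remains integrable because $\alpha/2 - 1 - \beta/2 > -1$ for $\alpha,\beta$ in the stated range. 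Here the realizations of $H$ for which $\|H_s\|_\alpha + \|\HA_s\|_{2\alpha} + \|\ha_s\|_{3\alpha}$ is finite and locally bounded in $s$ — and for which $s \mapsto$ (this rough-path data) is continuous — are the ``almost every'' set referred to in the statement, guaranteed by the Gaussian rough path theory invoked in the paragraph preceding the theorem.

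The second step is the contraction estimate. Given $v, \bar v \in \mathcal{E}_T$ with $u = v + h$, $\bar u = \bar v + h$, one bounds $\|\mathcal{M}v - \mathcal{M}\bar v\|_{\mathcal{E}_T}$ term by term. The semigroup term drops out (it does not depend on $v$). For the drift term, $\|g(u_s)\partial_x v_s + \hat f(u_s) - g(\bar u_s)\partial_x \bar v_s - \hat f(\bar u_s)\|_{\mathcal{C}^0} \lesssim \|v - \bar v\|_{\mathcal{E}_T}$ with a constant depending on the $\mathcal{C}^2_b$-norms of $g,f$ and on $\|v\|, \|\bar v\|$; after the integrable smoothing factor and integration in $s$ this contributes $\lesssim T^{1-\beta/2}\|v - \bar v\|_{\mathcal{E}_T}$. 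For the double-integral term one uses the local Lipschitz dependence of the composed controlled rough path $g(u_s)$ on its argument — a standard consequence of Theorem~\ref{thm:regu} applied to differences, using that $u_s$ and $\bar u_s$ differ by a controlled rough path of the purely smooth type $v_s - \bar v_s$ — together with Lemma~\ref{lem:ex}, producing a bound $\lesssim T^{\alpha/2}\,P\big(\|v\|_{\mathcal{E}_T}, \|\bar v\|_{\mathcal{E}_T}\big)\,\|v - \bar v\|_{\mathcal{E}_T}$ for a polynomial $P$. Restricting to the closed ball $\{v : \|v - S_\cdot(u_0 - h_0)\|_{\mathcal{E}_T} \le 1\}$, so that $\|v\|_{\mathcal{E}_T}$ is bounded by a constant depending only on $\|u_0\|_{\mathcal{C}^\beta}$ and the (fixed) rough-path data, and choosing $T$ small, makes both the self-map property and the contraction constant $< 1$ hold simultaneously; Banach's fixed point theorem then yields the unique $v$, hence the unique mild solution $u = v + h$ in $\mathcal{C}\big([0,T],\mathcal{C}^\beta([0,2\pi],\RR^d)\big)$.

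The main obstacle is the rough-integral term: one must carefully set up, for each time $s$, the composition $g(u_s)$ as a bona fide element of $\mathcal{C}_{\Ha_s}^{\alpha}$ with norm controlled uniformly in $s$ (combining the smooth-in-space path $v_s$ with the rough reference path $H_s$), verify that the map $s \mapsto \int_0^{2\pi} p_{t-s}(x-y) g(u(s,y))\,dH_s(y)$ is measurable and integrable with the singularity $(t-s)^{\alpha/2 - 1 - \beta/2}$ absolutely integrable, and extract the Lipschitz-in-$v$ estimate from Theorem~\ref{thm:regu} — all while tracking that the $\mathcal{C}^\beta$ spatial norm (not merely sup norm) is recovered with a time prefactor that vanishes as $T \to 0$. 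The remaining pieces (heat-semigroup smoothing estimates, the elementary bounds on $g, f, \hat f$, and the ball/contraction bookkeeping) are routine.
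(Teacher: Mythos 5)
Your overall strategy --- recasting \eqref{eq:solu2} as a fixed-point problem, splitting the map into a semigroup/drift part and a rough-integral part, handling the latter with Lemma~\ref{lem:ex} (hence Proposition~\ref{prop:sca1}) and the composition bound of Theorem~\ref{thm:regu}, and closing with Banach's fixed point theorem for small $T$ --- is exactly the paper's. But there is a genuine gap in your choice of fixed-point space. You iterate on $\mathcal{E}_T=\mathcal{C}\big([0,T],\mathcal{C}^{\beta}([0,2\pi],\RR^d)\big)$ with $\beta\le\frac13$, yet the drift term contains $g(u_s)\partial_x v_s$: for a generic $v\in\mathcal{E}_T$ the spatial derivative $\partial_x v_s$ does not exist, so $\mathcal{M}$ is not even defined on $\mathcal{E}_T$, and --- decisively for the contraction step --- your claimed Lipschitz bound $\|g(u_s)\partial_x v_s-g(\bar u_s)\partial_x\bar v_s\|_{\mathcal{C}^0}\lesssim\|v-\bar v\|_{\mathcal{E}_T}$ is false, because the left-hand side involves $\|\partial_x v_s-\partial_x\bar v_s\|_{\infty}$, which the $\mathcal{C}^{\beta}$-in-space norm cannot control. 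The parenthetical appeal to ``the $L^1([0,T],\mathcal{C}^1)$ component exactly as in Definition~\ref{def:solu1}'' does not repair this: that component is part of the definition of a solution, but it is not built into the norm in which you contract, so the estimate does not close.

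The paper avoids this by subtracting the free evolution $U_t=S_t(u_0-h_0)$ and running the fixed point for $w=u-H-U$ in the space $\mathcal{C}^1_T$ of spatially $\mathcal{C}^1$ paths with norm $\sup_{0\le t\le T}\|w_t\|_{\mathcal{C}^1}$ (see \eqref{eq:c1t}); then $\partial_x w_s$ is controlled by the fixed-point norm, and the smoothing bound $\|S_{t-s}\|\le C(t-s)^{-1/2}$ from $L^{\infty}$ to $\mathcal{C}^1$ in \eqref{eq:stbo} restores the $\mathcal{C}^1$ regularity of the output with an integrable singularity. You would need to restructure your argument along these lines (or use a time-weighted $\mathcal{C}^1$ norm) for the contraction to close. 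A secondary slip: to recover spatial $\mathcal{C}^{\beta}$ regularity of the rough-integral term you invoke an extra heat-kernel-gradient factor leading to the exponent $\alpha/2-1-\beta/2$, whose integrability would require $\alpha>\beta$ --- the opposite of the choice $\alpha\in(\frac14,\beta)$ that the argument needs; this detour is unnecessary anyway, since the $\mathcal{C}^1$-level bound of Lemma~\ref{lem:ex} already dominates the $\mathcal{C}^{\beta}$ norm on the compact spatial interval.
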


\begin{proof}
Fix
\begin{equation}
N:= \lfloor \frac{1}{\beta}\rfloor, \quad \al \in (\frac{1}{N+1},\beta), \quad \Ha_t:=\bh\in \mathcal{D}^\alpha([0,2\pi], \RR^d)\, \text{ above }\, H_t,\quad \forall t\in [0, T].
\mlabel{eq:c11t}
\end{equation}
Note that
\begin{equation}
2\al-\beta\overset{(\ref{eq:c11t})}{>}\frac{2}{N+1}-\frac{1}{N}=\frac{N-1}{N(N+1)}\ge 0.
\mlabel{eq:bealpha}
\end{equation}
We prove this theorem by using the Banach fixed point method. First, we define a Banach space $(\mathcal{C}^1_T, \|\cdot\|_{1, T})$ by setting
\begin{equation}
\mathcal{C}^1_T:= \{v:[0, T]\to  \mathcal{C}^1([0, 2\pi], \RR^d) \mid \|v\|_{1, T}<\infty  \}, \quad \|v\|_{1, T}:=\sup_{0\leq t\le T}\|v_t\|_{\mathcal{C}^1}.
\mlabel{eq:c1t}
\end{equation}
Denote
\begin{equation}
U_t :=S_t(u_0-h_0), \quad w_t:=u_t-H_t-U_t,\quad v_t:=u_t-H_t = w_t+U_t, \quad \forall t\in [0, T].
\mlabel{eq:uwv}
\end{equation}
Then (\ref{eq:solu2}) is transformed into
\begin{align*}
w_t(x)=&\ \int_0^t \Big(S_{t-s}(g(u_s)(\partial_xw_s+\partial_xU_s)+\hat{f}(u_s))\Big)(x)ds \nonumber\\
&\ +\int_0^t\int_0^{2\pi} p_{t-s}(x-y)g\Big(w(s, y)+H_s(y)+U_s(y)\Big)d\,H_s(y)ds.
\end{align*}
This motivates us to define a map
\begin{equation*}
\mathcal{M}_{T, H}:\mathcal{C}^1_T \to \mathcal{C}^1_T, \quad w\mapsto \mathcal{M}_{T, H}(w)
\end{equation*}
given by
\begin{align}
(\mathcal{M}_{T, H}w)(t, x):=&\ \int_0^t \bigg(S_{t-s}\Big(g(u_s)(\partial_xw_s+\partial_xU_s)+\hat{f}(u_s)\Big)\bigg)(x)ds \nonumber\\
&\ +\int_0^t\int_0^{2\pi} p_{t-s}(x-y)g\Big(w(s, y)+H_s(y)+U_s(y)\Big)d\,H_s(y)ds.     \mlabel{eq:solu3}
\end{align}
To handle the two integrals above, we define the following
two mappings
$$\mathcal{M}_{T, H}^{(1)}, \, \mathcal{M}_{T, H}^{(2)}:\mathcal{C}^1_T \to \mathcal{C}^1_T$$
given by
\begin{align}
(\mathcal{M}_{T, H}^{(1)}w)(t, x):=&\ \int_0^t \bigg(S_{t-s}\Big(g(u_s)(\partial_xw_s+\partial_xU_s)+\hat{f}(u_s)\Big)\bigg)(x)ds,\mlabel{eq:solu18} \\
(\mathcal{M}_{T, H}^{(2)}w)(t, x):=&\ \int_0^t\int_0^{2\pi} p_{t-s}(x-y)g\Big(w(s, y)+H_s(y)+U_s(y)\Big)d\,H_s(y)ds.\mlabel{eq:solu19}
\end{align}
The remainder of the proof is divided into the following two steps to establish the contractivity of $\mathcal{M}_{T, H}^{(1)}$ and $\mathcal{M}_{T, H}^{(2)}$.

\noindent {\bf Step 1. Contractivity of $\mathcal{M}_{T, H}^{(1)}$.}
Notice that
\begin{equation}
S_t:L^{\infty}([0,2\pi], \RR^d) \rightarrow \mathcal{C}^1([0,2\pi], \RR^d),\quad \forall t\in [0,T] \mlabel{eq:semigp}
 \end{equation}
is a linear operator with an upper bound
\begin{equation}
\|S_t \| \leq Ct^{-1/2}. \mlabel{eq:stbo}
\end{equation}

Let $w, \lbar{w} \in\mathcal{C}^1_T$ and choose $K>1$ such that
\begin{equation}
\|w\|_{1, T}\le K,\quad \|\lbar{w}\|_{1, T}\le K,\quad \|U\|_{1, T}\le K.
\mlabel{eq:solu11}
\end{equation}
Denote
\begin{equation}
\Hh :=\sup_{0\leq t\le 1}(\|H^1_t\|_{\mathcal{C}^{\al}}+\|H^2_t\|_{2\al}+\dots+\|H^N_t\|_{N\al})\overset{(\ref{eq:note1}), (\ref{eq:note4})}{=}\sup_{0\leq t\le 1}(\|H_t\|_{\infty}+\|\Ha_t\|_{\al}).
\mlabel{eq:Hh}
\end{equation}
Then
\begin{align}
&\ \|\mathcal{M}_{T, H}^{(1)}w\|_{1, T}\nonumber\\
=&\ \sup_{0\leq t\le T}\|\mathcal{M}_{T, H}^{(1)}w_t\|_{\mathcal{C}^1}\nonumber\\
=&\ \sup_{0\leq t\le T}\Big\|\int_0^t \Big(S_{t-s}(g(u_s)(\partial_xw_s+\partial_xU_s)+\hat{f}(u_s))\Big)ds \Big\|_{\mathcal{C}^1}\nonumber\\
\le&\ \sup_{0\leq t\le T}\int_0^t\Big\|S_{t-s}\Big(g(u_s)(\partial_xw_s+\partial_xU_s)+\hat{f}(u_s)\Big) \Big\|_{\mathcal{C}^1}ds\nonumber\\
\le&\ \sup_{0\leq t\le T}\int_0^t\|S_{t-s}\|\,\|g(u_s)(\partial_xw_s+\partial_xU_s)+\hat{f}(u_s)\|_{\infty}ds\nonumber\\
=&\ \sup_{0\leq t\le T}\int_0^t\|S_{t-s}\|\,\|g(u_s)(\partial_xw_s+\partial_xU_s)+f(u_s)+u_s\|_{\infty}ds\hspace{1cm} (\text{by $\hat{f}(u)=f(u)+u$})\nonumber\\
=&\ \sup_{0\leq t\le T}\int_0^t\|S_{t-s}\|\,\|g(u_s)(\partial_xw_s+\partial_xU_s)+f(u_s)+w_s+H_s+U_s\|_{\infty}ds\hspace{0.4cm} (\text{by $u=w+H+U$})\nonumber\\
\le&\ \sup_{0\leq t\le T}\int_0^t\|S_{t-s}\|\Big(\|g(u_s)\|_{\infty}(\|\partial_xw_s\|_{\infty}+
\|\partial_xU_s\|_{\infty})+\|f(u_s)\|_{\infty}+\|w_s\|_{\infty}+\|H_s\|_{\infty}+\|U_s\|_{\infty}\Big)ds\nonumber\\
\le&\ \sup_{0\leq t\le T}\int_0^t\|S_{t-s}\|\Big(\|g\|_{\infty}(\|w\|_{1, T}+K)+\|f\|_{\infty}+\|w\|_{_{1, T}}+\Hh+K\Big)ds\nonumber\\
\le&\ \sup_{0\leq t\le T}\int_0^tC(t-s)^{-1/2}\Big(\|g\|_{\infty}(K+K)+\|f\|_{\infty}+K+\Hh+K \Big)ds \hspace{1cm} (\text{by (\ref{eq:stbo}) and~(\ref{eq:solu11})})\nonumber\\
\le&\  \sup_{0\leq t\le T}\int_0^tC(t-s)^{-1/2}2(1+\|g\|_{\infty}+\|f\|_{\infty})(1+K)(1+\Hh)ds\nonumber\\
=&\ \sup_{0\leq t\le T}4Ct^{1/2}(1+\|g\|_{\infty}+\|f\|_{\infty})(1+K)(1+\Hh)\nonumber\\
\le&\ \Big(4C(1+\|g\|_{\infty}+\|f\|_{\infty})(1+K)\Big)(1+\Hh)T^{1/2}\nonumber\\
=&\ \Big(4C(1+\|g\|_{\infty}+\|f\|_{\infty})(1+K)\Big)(1+\Hh)T^{(1-\beta)/2}T^{\beta/2}\nonumber\\
\le&\ \Big(4C(1+\|g\|_{\infty}+\|f\|_{\infty})(1+K)\Big)(1+\Hh)T^{\beta/2}\hspace{1cm} (\text{by $T\le 1$ and $\beta\le 1 $})\nonumber\\
=:&\ C(1+\Hh)T^{\beta/2}. \mlabel{eq:solu4}
\end{align}
From the above calculation, if $f$ and $g$ are bounded, then then the constant
$C$ can be chosen proportional to $K$.
To establish the contractivity of the map
$\mathcal{M}_{T, H}^{(1)} $, we observe the following
\begin{align}
(\mathcal{M}_{T, H}^{(1)}w-\mathcal{M}_{T, H}^{(1)}\lbar{w})_t=&\ \int_0^t \bigg(S_{t-s}\Big(g(u_s)(\partial_xw_s+\partial_xU_s)+\hat{f}(u_s)\Big)\bigg)ds\nonumber\\
&\ -  \int_0^t \bigg(S_{t-s}\Big(g(\lbar{u}_s)(\partial_x\lbar{w}_s+\partial_xU_s)+\hat{f}(\lbar{u}_s)\Big)\bigg)ds \hspace{2cm} (\text{by (\ref{eq:solu18})}) \nonumber \\
=&\ \int_0^t \bigg(S_{t-s}\Big(g(u_s)(\partial_xw_s-\partial_x\lbar{w}_s+\partial_x\lbar{w}_s+\partial_xU_s)+\hat{f}(u_s)\Big)\bigg)ds\nonumber\\
&\ -  \int_0^t \bigg(S_{t-s}\Big(g(\lbar{u}_s)(\partial_x\lbar{w}_s+\partial_xU_s)+\hat{f}(\lbar{u}_s) \Big)\bigg)ds  \nonumber \\
=&\ \int_0^t S_{t-s}\Big(g(u_s) (\partial_xw_s-\partial_x\lbar{w}_s)+\hat{f}(u_s)-\hat{f}(\lbar{u}_s)\Big)ds\nonumber\\
&\ -  \int_0^t S_{t-s}\Big((g(u_s)-g(\lbar{u}_s))(\partial_x\lbar{w}_s+\partial_xU_s)\Big)ds .\mlabel{eq:fina3}
\end{align}
Further,
\begin{align}
\|g(u_s)-g(\lbar{u}_s)\|_{\infty}\le&\ \|g'\|_{\infty}\|u_s-\lbar{u}_s\|_{\infty} \hspace{2cm}(\text{by differential mean value theorem}) \nonumber\\
=&\ \|g'\|_{\infty}\|(w_s+H_s+U_s)-(\lbar{w}_s+H_s+U_s )\|_{\infty} \hspace{2cm}(\text{by (\ref{eq:uwv})}) \nonumber\\
=&\ \|g'\|_{\infty}\|w_s-\lbar{w}_s\|_{\infty}\nonumber\\
\le&\ \|g'\|_{\infty}\|w_s-\lbar{w}_s\|_{\mathcal{C}^1}\nonumber \\
\le&\ C \|w_s-\lbar{w}_s\|_{\mathcal{C}^1} \hspace{2cm}(\text{by taking a constant $C$}),\mlabel{eq:fina4}
\end{align}
and similarly,
\begin{equation}
\|\hat{f}(u_s)-\hat{f}(\lbar{u}_s)\|_{\infty}\le C \|w_s-\lbar{w}_s\|_{\mathcal{C}^1}.\mlabel{eq:fina5}
\end{equation}
The contractivity of the map $\mathcal{M}_{T, H}^{(1)} $ is established through the following argument:
\begin{align}
&\ \|\mathcal{M}_{T, H}^{(1)}w-\mathcal{M}_{T, H}^{(1)}\lbar{w}\|_{1, T}\nonumber\\
=&\ \sup_{0\leq t\le T}\|\mathcal{M}_{T, H}^{(1)}w_t-\mathcal{M}_{T, H}^{(1)}\lbar{w}_t\|_{\mathcal{C}^1}\nonumber\\
=&\ \sup_{0\leq t\le T}\Big\|\int_0^t S_{t-s}\Big(g(u_s) (\partial_xw_s-\partial_x\lbar{w}_s)+\hat{f}(u_s)-\hat{f}(\lbar{u}_s)\Big)ds\nonumber\\
&\ -  \int_0^t S_{t-s}\Big((g(u_s)-g(\lbar{u}_s))(\partial_x\lbar{w}_s+\partial_xU_s)\Big)ds\Big\|_{\mathcal{C}^1}\hspace{2cm} (\text{by (\ref{eq:fina3})})\nonumber\\
\le&\ \sup_{0\leq t\le T}\Big\|\int_0^t S_{t-s}\Big(g(u_s) (\partial_xw_s-\partial_x\lbar{w}_s)+\hat{f}(u_s)-\hat{f}(\lbar{u}_s)\Big)ds \Big\|_{\mathcal{C}^1}\nonumber\\
&\ +\sup_{0\leq t\le T}\Big\|\int_0^t S_{t-s}\Big((g(u_s)-g(\lbar{u}_s))(\partial_x\lbar{w}_s+\partial_xU_s)\Big)ds \Big\|_{\mathcal{C}^1} \nonumber\\
\le&\ \sup_{0\leq t\le T}\int_0^t \|S_{t-s}\|\,\Big(\|g(u_s)\|_{\infty} \|\partial_xw_s-\partial_x\lbar{w}_s\|_{\infty}+\|\hat{f}(u_s)-\hat{f}(\lbar{u}_s)\|_{\infty}\Big)ds \hspace{0.5cm} (\text{by (\ref{eq:semigp})}) \nonumber\\
&\ +\sup_{0\leq t\le T}\int_0^t \|S_{t-s}\|\,\Big(\|g(u_s)-g(\lbar{u}_s)\|_{\infty}\|\partial_x\lbar{w}_s+\partial_xU_s\|_{\infty}\Big)ds  \nonumber\\
\le&\ \sup_{0\leq t\le T}\int_0^t \|S_{t-s}\|\,\Big(\|g(u_s)\|_{\infty} \|w_s-\lbar{w}_s\|_{\mathcal{C}^1}+C\|w_s-\lbar{w}_s\|_{\mathcal{C}^1}\Big)ds \nonumber\\
&\ +\sup_{0\leq t\le T}\int_0^t \|S_{t-s}\|\,\Big(C\|w_s-\lbar{w}_s\|_{\mathcal{C}^1}\|\partial_x\lbar{w}_s+\partial_xU_s\|_{\infty}\Big)ds \hspace{0.5cm} (\text{by (\ref{eq:fina4}) and (\ref{eq:fina5})}) \nonumber\\
=&\ \bigg(\sup_{0\leq t\le T}\int_0^t \|S_{t-s}\|\,\Big(\|g(u_s)\|_{\infty}+C\Big)ds
+\sup_{0\leq t\le T}\int_0^t \|S_{t-s}\|\,\Big(C\|\partial_x\lbar{w}_s+\partial_xU_s\|_{\infty}\Big)ds\bigg)\|w_s-\lbar{w}_s\|_{\mathcal{C}^1}\nonumber\\
\le&\ C(1+\Hh)T^{\beta/2}\|w-\lbar{w}\|_{1, T}. \mlabel{eq:solu5}
\end{align}
Here, the final inequality follows from (\ref{eq:c1t}) and an argument analogous to that of (\ref{eq:solu4}), by replacing
\[
1 \,\text{ with }\, \partial_xv_s+\partial_xU_s, \quad C\,\text{ with }\, \hat{f}(u_s),\quad C(\partial_x\lbar{v}_s+\partial_xU_s)\,\text{ with }\, g(u_s)(\partial_xv_s+\partial_xU_s)+\hat{f}(u_s).
\]
Based on the estimates in (\ref{eq:fina4}), (\ref{eq:fina5}) and (\ref{eq:solu5}), the constant
$C$ can be taken proportional to $K$, assuming that $g$, $Dg$, $f$ and $Df$ are bounded. \vskip 0.1in

\noindent {\bf Step 2. Contractivity of $\mathcal{M}_{T, H}^{(2)}$.}
As noted in Remark~\mref{rk:roughint}, the inner integral on the right-hand side of~(\ref{eq:solu19}) can be interpreted as a rough integral.
Since $$(H_s, \id, \underbrace{0, \ldots, 0}_{N-2})$$ is an ${\bf H}_s$-controlled rough path, by applying Definition~\mref{def:regu} with
$$\varphi(\cdot):= g(w(s, x)+\cdot+U(s, x)):\RR^d\to  \mathcal{L}(\RR^d, \RR^d),$$
we conclude that
$$\Z_s := (Z^0_s, \ldots, Z^{N-1}_s) \in  \mathcal{C}_{\Ha_s}^{\alpha}([0, 2\pi], \mathcal{L}(\RR^d, \RR^d))$$ is a controlled rough path,
where
\begin{equation}
Z^0_s(x):= g(w(s, x)+H(s, x)+U(s, x))
\mlabel{eq:16}
\end{equation}
and
\begin{equation}
Z^r_s(x):=\sum_{j=1}^{N-1}{1\over j!}D^jg(w(s, x)+H(s, x)+U(s, x))\Big(\sum_{i_1+\dots+i_j=r}\big(H_s^{i_1}(x)\otimes\dots \otimes H_s^{i_j}(x)\big)\circ \delta_j \mid _{\RR^{\otimes r}} \Big)
\mlabel{eq:solu6}
\end{equation}
for $r =1, \dots, N-1$. By Proposition~\ref{pro:regu}, there exists a constant $C\in \RR$ such that
\begin{align}
\|\Z_s\|_{\Ha_s,\al}\le&\ C\Big(\sum_{i=0}^N\|D^i\f\|_{\infty}\Big)\Big(\sum_{i=1}^{N-1}\|H^i\|_{\C}^l\Big)(1+\Hh)^k\nonumber\\
\le&\ C\Big(\sum_{i=0}^N\|D^ig\|_{\infty}\Big)\Big(\sum_{i=1}^{N-1}\|H^i\|_{\C}^l\Big)(1+\Hh)^k\nonumber\\
\le&\ C \Big(\sum_{i=0}^N\|D^ig\|_{\infty}+\|w_s\|_{2\al}+\|U_s\|_{2\al}\Big)\Big(\sum_{i=1}^{N-1}\|H^i\|_{\C}^l\Big)(1+\Hh)^k \nonumber\\
\le&\ C \Big({\sum_{i=0}^N\|D^ig\|_{\infty}+\| w_s\|_{2\al}\over \|w_s\|_{2\al}}  +1\Big)(\|w_s\|_{2\al}+\|U_s\|_{2\al})\Big(\sum_{i=1}^{N-1}\|H^i\|_{\C}^l\Big)(1+\Hh)^k \nonumber\\
=:&\ C (\|w_s\|_{2\al}+\|U_s\|_{2\al})\Big(\sum_{i=1}^{N-1}\|H^i\|_{\C}^l\Big)(1+\Hh)^k. \mlabel{eq:fina0}
\end{align}
By the property of the heat semigroup, we have~\cite{Hair11}
\begin{equation}
\|U_s\|_{2\al}\le Cs^{-(2\al-\beta)/2}
\mlabel{eq:fina1}
\end{equation}
for some constant $C$ and so
\begin{align}
\|\Z_s\|_{\Ha_s,\al}\le&\ C (\|w_s\|_{2\al}+Cs^{-(2\al-\beta)/2})\Big(\sum_{i=1}^{N-1}\|H^i\|_{\C}^l\Big)(1+\Hh)^k \hspace{2cm} (\text{by (\ref{eq:fina0}) and~(\mref{eq:fina1})})\nonumber\\
\le&\ C (\|w_s\|_{2\al}+C)(1+s^{-(2\al-\beta)/2})\Big(\sum_{i=1}^{N-1}\|H^i\|_{\C}^l\Big)(1+\Hh)^k\nonumber\\
=&\ C \Big(\sum_{i=1}^{N-1}\|H^i\|_{\C}^l\Big)(1+\Hh)^k(1+s^{-(2\al-\beta)/2}) \hspace{1cm} (\text{by setting $C:=C(\|w_s\|_{2\al}+C)$}).\mlabel{eq:solu7}
\end{align}
From the calculation in (\ref{eq:fina0}), the constant $C$ can be taken proportional to $K$, provided that
$D^ig$, for $i=0,\ldots, N$, are all bounded.

To estimate $\|\mathcal{M}_{T, H}^{(2)}v\|_{1, T}$, we observe that
\begin{align}
&\ \Big|\partial_x\int_0^{2\pi} p_{t-s}(x-y)Z_s(y)\,d\,H_s  \Big|\\
\le&\ C(t-s)^{{\al \over 2}-1}\|\Z_s\|_{\Ha,\al}\|\Ha_s\|_{\al} \hspace{2.5cm} \text{(by Lemma~\ref{lem:ex})}\nonumber\\
\leq &\ C (t-s)^{{\al \over 2}-1} \Big(\sum_{i=1}^{N-1}\|H^i\|_{\C}^l\Big)(1+\Hh)^k(1+s^{-{2\al-\beta\over 2}})(1+\Hh) \hspace{1.5cm} (\text{by~(\ref{eq:Hh}) and~(\ref{eq:solu7})})\nonumber\\
=&\ C\Big(\sum_{i=1}^{N-1}\|H^i\|_{\C}^l\Big)(1+\Hh)^{k+1}(1+s^{-{2\al-\beta\over 2}})(t-s)^{{\al \over 2}-1}. \mlabel{eq:solu12}
\end{align}
From this, we obtain the desired bound
\begin{align}
&\ \|\mathcal{M}_{T, H}^{(2)}v\|_{1, T}\nonumber\\
=&\ \sup_{0\leq t\le T}\|\mathcal{M}_{T, H}^{(2)}v_t\|_{\mathcal{C}^1}\nonumber\\
=&\ \sup_{0\leq t\le T}\Big\|\int_0^t\int_0^{2\pi} p_{t-s}(\cdot-y)g(v(y, s)+H_s(y)+U_s(y))d\,H_s(y)ds\Big\|_{\mathcal{C}^1}\nonumber\\
=&\ \sup_{0\leq t\le T}\Big\|\int_0^t\int_0^{2\pi} p_{t-s}(\cdot-y)Z_s(y) d\,H_s(y)ds\Big\|_{\mathcal{C}^1}
\hspace{2cm} (\text{by (\ref{eq:solu6})})\nonumber \\
\le&\ \sup_{0\leq t\le T}\int_0^t\Big\|\int_0^{2\pi} p_{t-s}(\cdot-y)Z_s(y) d\,H_s(y) \Big\|_{\mathcal{C}^1}ds\nonumber\\
=&\ \sup_{0\leq t\le T}\int_0^t\bigg(\Big\|\int_0^{2\pi} p_{t-s}(\cdot-y)Z_s(y) d\,H_s(y)\Big\|_{\infty}+\Big\|\int_0^{2\pi} p_{t-s}(\cdot-y)Z_s(y)d\,H_s(y)\Big\|_{1}\bigg)ds\hspace{1cm} (\text{by (\ref{eq:note2})})\nonumber\\
\le&\ \sup_{0\leq t\le T}C\int_0^t\Big\|\partial_x\int_0^{2\pi} p_{t-s}(\cdot-y)Z_s(y)d\,H_s(y) \Big\|_{\infty}ds \nonumber \\
&\ \hspace{1.5cm}(\text{Both of the above norms can be controlled by the infinite norm of the derivative})\nonumber\\
\le&\ \sup_{0\leq t\le T}C\int_0^t\Big(\sum_{i=1}^{N-1}\|H^i\|_{\C}^l\Big)(1+\Hh)^{k+1}(1+s^{-{2\al-\beta\over 2}})(t-s)^{{\al \over 2}-1}ds\hspace{2cm} (\text{by (\ref{eq:solu12})})\nonumber\\
\le&\ \sup_{0\leq t\le T}C\Big(\sum_{i=1}^{N-1}\|H^i\|_{\C}^l\Big)(1+\Hh)^{k+1}  (t^{\al \over 2}+t^{{-2\al+\beta+2\over 2}}) \hspace{2cm} (\text{by calculating the integral})\nonumber\\
=&\ \sup_{0\leq t\le T}C\Big(\sum_{i=1}^{N-1}\|H^i\|_{\C}^l\Big)(1+\Hh)^{k+1}(t^{2\al-\beta \over 2}+t^{{-\al+2\over 2}})t^{\beta-\al \over 2}\nonumber\\
\le&\ C\Big(\sum_{i=1}^{N-1}\|H^i\|_{\C}^l\Big)(1+\Hh)^{k+1}(T^{2\al-\beta \over 2}+T^{{-\al+2\over 2}})T^{\beta-\al \over 2}\hspace{1cm} (\text{by $2\al-\beta,\ -\al+2,\ \beta-\al \overset{(\ref{eq:c11t}),(\ref{eq:bealpha})}{>} 0$})\nonumber\\
\le&\ C\Big(\sum_{i=1}^{N-1}\|H^i\|_{\C}^l\Big)(1+\Hh)^{k+1}T^{\beta-\al \over 2}\hspace{2cm} (\text{by $T\le1$}). \mlabel{eq:solu8}
\end{align}

Let $\lbar{\Z}_s=(\lbar{Z}^0_s, \ldots, \lbar{Z}^{N-1}_s)$ be an $\Ha_s$-controlled rough path associated to $\lbar{w}_s$.
Then
\begin{align*}
\lbar{Z}^0_s-Z^0_s \overset{(\ref{eq:16})}{=}&\ g\Big(\lbar{w}(s, x)+H(s, x)+U(s, x)\Big)-g\Big(w(s, x)+H(s, x)+U(s, x)\Big)\\
=&\ \int_0^1 \frac{d}{d\lambda} g\Big(H_s(x)+U_s(x)+w_s(x)+\lambda(\lbar{w}_s(x)-w_s(x))\Big)(\lbar{w}_s(x)-w_s(x))d\lambda.
\end{align*}
Notice that $\lbar{\Z}_s-\Z_s$ is still an $\Ha_s$-controlled rough path.
Using a similar argument as in (\ref{eq:solu6}) and (\ref{eq:fina0}), and applying Definition~\mref{def:regu} with
$$\varphi(\cdot):= \int_0^1Dg\Big(\cdot +U_s(x)+w_s(x)+\lambda(\lbar{w}_s(x)-w_s(x))\Big)(\lbar{w}_s(x)-w_s(x))d\lambda:\RR^d\to  \mathcal{L}(\RR^d, \RR^d),$$
we obtain
\begin{align}
 \|\lbar{\Z}_s-\Z_s\|_{\Ha_s,\al}
\le&\ C  \Big(\sum_{i=0}^N\|D^i\f\|_{\infty}\Big)\Big(\sum_{i=1}^{N-1}\|H^i\|_{\C}^l\Big)(1+\Hh)^k \nonumber\\
\le&\ C\Big(\sum_{i=0}^N\int_0^1\|D^{i+1}g\|_{\infty}\|\lbar{w}-w\|_{\infty}d\lambda\Big)\Big(\sum_{i=1}^{N-1}\|H^i\|_{\C}^l\Big)(1+\Hh)^k\nonumber\\
=&\ C\Big(\sum_{i=0}^N\|D^{i+1}g\|_{\infty}\Big)\Big(\sum_{i=1}^{N-1}\|H^i\|_{\C}^l\Big)(1+\Hh)^k\|\lbar{w}-w\|_{\infty}  \nonumber\\
\le&\ C\Big(\sum_{i=0}^N\|D^{i+1}g\|_{\infty}\Big)\Big(\sum_{i=1}^{N-1}\|H^i\|_{\C}^l\Big)(1+\Hh)^k\|\lbar{w}-w\|_{1, T}\nonumber\\
\le&\ C\Big(\sum_{i=1}^{N-1}\|H^i\|_{\C}^l\Big)(1+\Hh)^k(1+s^{-{2\al-\beta\over 2}})\|\lbar{w}-w\|_{1, T}. \mlabel{eq:solu21}
\end{align}
Here, the last step follows from~(\ref{eq:solu7}) and the last three steps of~(\ref{eq:fina0}).
Further,
\begin{align}
&\ \Big|\partial_x\int_0^{2\pi} p_{t-s}(x-y)(\lbar{Z}_s-Z_s)\,d\,H_s  \Big| \nonumber\\
\le&\ C(t-s)^{{\al \over 2}-1}\|\lbar{\Z}_s-\Z_s\|_{\Ha,\al}\|\Ha_s\|_{\al} %
\hspace{1cm} \text{(by Lemma~\ref{lem:ex})}\nonumber\\
\le&\ C\Big(\sum_{i=1}^{N-1}\|H^i\|_{\C}^l\Big)(1+\Hh)^{k+1}(1+s^{-{2\al-\beta\over 2}})(t-s)^{{\al \over 2}-1}\|\lbar{w}-w\|_{1, T}
\hspace{1cm} (\text{by (\ref{eq:Hh}) and (\ref{eq:solu21})}),    \mlabel{eq:solu20}
\end{align}
and so
\begin{align}
&\ \|\mathcal{M}_{T, H}^{(2)}w-\mathcal{M}_{T, H}^{(2)}\lbar{w}\|_{1, T} \nonumber\\
\leq &\ \sup_{0\leq t\le T}C\int_0^t\Big\|\partial_x\int_0^{2\pi} p_{t-s}(\cdot-y)(\lbar{Z}_s-Z_s)(y)d\,H_s(y) \Big\|_{\infty}ds \hspace{1cm} (\text{by the first six steps of~(\ref{eq:solu8})})\nonumber \\
\le&\ \sup_{0\leq t\le T}C\int_0^t\Big(\sum_{i=1}^{N-1}\|H^i\|_{\C}^l\Big)(1+\Hh)^{k+1}(1+s^{-{2\al-\beta\over 2}})(t-s)^{{\al \over 2}-1}\|\lbar{w}-w\|_{1, T}ds\hspace{1cm} (\text{by (\ref{eq:solu20})})\nonumber \\
\le&\ C\Big(\sum_{i=1}^{N-1}\|H^i\|_{\C}^l\Big)(1+\Hh)^{k+1}T^{\beta-\al \over 2}\|\lbar{w}-w\|_{1, T} \hspace{1cm} (\text{by the last five steps of (\ref{eq:solu8})}). \mlabel{eq:solu9}
\end{align}
From the calculation in (\ref{eq:solu21}), it follows that $C$ is proportional to
$K$, provided that $Dg$, $D^2g$, $D^3g$ and $D^4g$ are all bounded.

In summary, combining~(\ref{eq:solu3}), (\ref{eq:solu5}) and~(\ref{eq:solu9}), we conclude
\begin{align*}
\|\mathcal{M}_{T, H}w-\mathcal{M}_{T, H}\lbar{w}\|_{1, T}
\le&\  \|\mathcal{M}_{T, H}^{(1)}w-\mathcal{M}_{T, H}^{(1)}\lbar{w}\|_{1, T}+\|\mathcal{M}_{T, H}^{(2)}w-\mathcal{M}_{T, H}^{(2)}\lbar{w}\|_{1, T}\\
\le& C(1+\Hh)T^{\beta/2}\|w-\lbar{w}\|_{1, T}+ C\Big(\sum_{i=1}^{N-1}\|H^i\|_{\C}^l\Big)(1+\Hh)^{k+1}T^{\beta-\al \over 2}\|w-\lbar{w}\|_{1, T}\\
\le&\ {1\over 2}\|w-\lbar{w}\|_{1, T}\hspace{1cm} (\text{by $T$ being small enough}).
\end{align*}
By applying the Banach fixed point theorem,
there is a unique $$u\in \mathcal{C}\Big([0, T],\mathcal{C}^{\beta}([0,2\pi],\RR^d)\Big)$$ that satisfies~(\ref{eq:bspde}). This completes the proof.
\end{proof}

The following result demonstrates that the unique solution to equation~(\ref{eq:bspde}) is global in the pathwise sense.

\begin{theorem}
Let $\beta \in (0, \frac{1}{2}) $ and $u_0\in \mathcal{C}^{\beta}([0,2\pi],\RR^d)$. Suppose that $f$ and $g$ are bounded, and all derivatives of $f$ and $g$ are bounded.
Then, for almost every realization of the driving process $H$, the equation~(\ref{eq:bspde}) has a unique mild solution $u$ in $\mathcal{C}\Big([0, 1],\mathcal{C}^{\beta}([0,2\pi],\RR^d)\Big)$.
\mlabel{thm:eu2}
\end{theorem}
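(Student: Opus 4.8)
The plan is to promote the local result of Theorem~\ref{thm:eu1} to a global one by a standard bootstrap: derive an \emph{a priori} bound on $\|u_t\|_{\mathcal{C}^\beta}$ valid on the entire interval of existence, depending only on $\|u_0\|_{\mathcal{C}^\beta}$, on $\Hh$ (see~(\ref{eq:Hh})) and on the now-finite quantities $\|D^jf\|_\infty,\|D^jg\|_\infty$ ($j\ge 0$), and then iterate. Throughout I would fix a realization of $H$ for which $\Hh<\infty$ and $\sup_{t\in[0,1]}\|h_t\|_{\mathcal{C}^\beta}<\infty$; almost every realization qualifies by the Gaussian rough path results recalled in Subsection~\ref{ss:sec4}.

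\textbf{Step 1: blow-up alternative.} Let $T^\ast\in(0,1]$ be the supremum of those $T$ for which~(\ref{eq:bspde}) admits a (necessarily unique, by Theorem~\ref{thm:eu1}) mild solution in $\mathcal{C}([0,T],\mathcal{C}^\beta([0,2\pi],\RR^d))$; then $T^\ast>0$, and gluing by uniqueness yields a solution $u$ on $[0,T^\ast)$. Re-reading the proof of Theorem~\ref{thm:eu1}, the admissible length of the existence interval there can be taken to depend only on $\Hh$, on the $\|D^jf\|_\infty,\|D^jg\|_\infty$ ($j\ge 0$) and on the size $R$ of the initial datum, non-increasingly in $R$; moreover the mild identity~(\ref{eq:solu2}) is invariant under time translation (replacing $h_0$ by $h_{t_1}$, leaving $\Hh$ and $\sup_{r}\|h_{t_1+r}\|_{\mathcal{C}^\beta}$ unaffected). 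Restarting the equation at $t_1\in(0,T^\ast)$ with datum $u_{t_1}$ and gluing by uniqueness, one deduces that if $\limsup_{t\uparrow T^\ast}\|u_t\|_{\mathcal{C}^\beta}<\infty$, then $u$ extends to a solution past $T^\ast$. Hence, if $T^\ast\le 1$ and the solution does not already exist on all of $[0,1]$, then necessarily $\|u_t\|_{\mathcal{C}^\beta}\to\infty$ as $t\uparrow T^\ast$.

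\textbf{Step 2: the a priori estimate (the crux).} Write $v=u-h$ as in~(\ref{eq:uwv}) and work from~(\ref{eq:solu2}); for $t\in(0,T^\ast)$ one has $v_t\in\mathcal{C}^1$ (by the smoothing in the mild formula), and I would bound $\Theta(t):=\|v_t\|_{\mathcal{C}^1}$. Boundedness of $g$ gives $\|g(u_s)\partial_xv_s\|_\infty\le\|g\|_\infty\Theta(s)$, and since $\hat f(u_s)=f(u_s)+u_s$ with $\|u_s\|_\infty\le\Theta(s)+\|h_s\|_\infty$ and $f$ bounded, the zeroth-order term is affine in $\Theta(s)$; together with $\|S_\tau\|\le C\tau^{-1/2}$ (see~(\ref{eq:stbo})) and $\|S_t(u_0-h_0)\|_{\mathcal{C}^1}\le Ct^{-1/2}\|u_0-h_0\|_\infty$, these pieces contribute at most $\psi(t)+B\int_0^t(t-s)^{-1/2}\Theta(s)\,ds$, where $\psi\in L^1_{\mathrm{loc}}[0,1]$ gathers the integrably singular data terms and $B$ depends only on $\|g\|_\infty$. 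For the double integral in~(\ref{eq:solu2}), regard $\Z_s=g(u_s)=(Z_s,Z_s',Z_s'')$ as an $\Ha_s$-controlled rough path via Definition~\ref{def:regu} (with $u_s=(w_s+U_s)+H_s$, $w_s+U_s\in\mathcal{C}^1$), estimate $\|\Z_s\|_{\Ha_s,\al}$ through Theorem~\ref{thm:regu} in terms of the $\|D^jg\|_\infty$ and $\Hh$, and apply Lemma~\ref{lem:ex}; since $\al>\tfrac14$, the kernel $(t-s)^{\al/2-1}$ is integrable and this contributes a term $\int_0^t(t-s)^{\al/2-1}\Xi(\Theta(s))\,ds$. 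The delicate point is to organize this last contribution so that the resulting Volterra inequality for $\Theta$ closes: this is exactly where the full hypothesis that $f,g$ and \emph{all} their derivatives are bounded is used, together with the shuffle (geometric) relations of Remark~\ref{rem:wrp}, which supply the cancellations that keep the effective dependence of $\|\Z_s\|_{\Ha_s,\al}$ on $\Theta(s)$ mild enough for a Gronwall argument. Granting this, a singular Gronwall--Henry inequality yields $\Theta(t)\le\Psi(t)$ with $\Psi$ finite and locally bounded on $(0,1]$ and depending only on the admissible data; since $\mathcal{C}^1\hookrightarrow\mathcal{C}^\beta$, $\sup_t\|h_t\|_{\mathcal{C}^\beta}<\infty$, and $\|u_t\|_{\mathcal{C}^\beta}$ is bounded on a small initial interval by $u\in\mathcal{C}([0,\varepsilon],\mathcal{C}^\beta)$, one concludes $\sup_{t\in[0,T^\ast)}\|u_t\|_{\mathcal{C}^\beta}\le M<\infty$.

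\textbf{Step 3: conclusion.} This uniform bound contradicts the alternative of Step 1 unless $T^\ast=1$ and the solution already exists on $[0,1]$; a final restart near $t=1$ (with the bounded datum $u_{1-\delta}$) upgrades this to a solution on the closed interval $[0,1]$, and uniqueness there follows by applying the local uniqueness of Theorem~\ref{thm:eu1} along a partition of $[0,1]$ of mesh smaller than the existence time associated with data of size $M$. The main obstacle is Step 2: in contrast with the Young regime, for $\al\in(\tfrac14,\tfrac13]$ the bound on $\|\Z_s\|_{\Ha_s,\al}$ coming directly from Theorem~\ref{thm:regu} is super-linear in $\|v_s\|_{\mathcal{C}^1}$, so closing the Gronwall loop is possible only by exploiting the boundedness of all derivatives of $g$ and the geometric structure of the rough path lift of $h$.
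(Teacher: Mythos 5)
Your strategy (maximal time $T^\ast$, blow-up alternative, a priori bound, singular Gronwall) is a genuinely different and much heavier route than the paper's, and it contains a real gap exactly at the point you yourself flag as ``the crux''. The paper's existence proof is a one-liner by comparison: it rests on the observation that, under the standing hypothesis that $f$, $g$ and all their derivatives are bounded, the local existence time $T$ produced by Theorem~\ref{thm:eu1} can be taken \emph{independent of the initial condition} (this is the purpose of the remarks scattered through that proof noting that the constants are controlled by $\|D^jf\|_\infty$, $\|D^jg\|_\infty$ and $\Hh$), so one simply restarts at $T,2T,\dots$ and covers $[0,1]$ in finitely many steps; no a priori estimate on $\|u_t\|_{\mathcal{C}^\beta}$ is ever needed. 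Your Step~1 already contains the germ of this observation, but you then posit that the existence time decreases with the size $R$ of the datum, which forces you into the blow-up alternative and into Step~2.

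The gap in Step~2 is concrete: the bound from Theorem~\ref{thm:regu} on $\|\Z_s\|_{\Ha_s,\al}$ is of the form $C(1+\|\Y_s\|_{\Ha_s,\al})^3$, and the controlled-path norm of $u_s=v_s+H_s$ over $\Ha_s$ genuinely contains $\Theta(s)=\|v_s\|_{\mathcal{C}^1}$ through the remainder terms (for instance the term $\|D^3g\|_\infty\,|\delta u_{x,y}|^3$ contributes $\Theta(s)^3$ to $\|R^1\|_{3\al}$; the exponent bookkeeping $3-3\al\ge 0$ keeps it finite but does not remove the cubic dependence). After Lemma~\ref{lem:ex} the resulting Volterra inequality reads $\Theta(t)\le \psi(t)+B\int_0^t(t-s)^{-1/2}\Theta(s)\,ds+B'\int_0^t(t-s)^{\al/2-1}\bigl(1+\Theta(s)\bigr)^3\,ds$, which is cubic and therefore \emph{not} amenable to a Gronwall--Henry argument: such inequalities are consistent with finite-time blow-up, so no global bound follows. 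You acknowledge the super-linearity and appeal to unspecified ``cancellations'' from the shuffle relations to tame it, but you never exhibit them (and they are not what the paper uses either); this is precisely the step that would have to be proved and is not. As written, Step~2 does not close and the global existence claim is not established by your argument. Your Step~3 uniqueness-by-partition is fine and essentially equivalent to the paper's supremum/continuity argument.
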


\begin{proof}
{\bf (Existence).} Let $T$ be as in Theorem~\ref{thm:eu1}.
Notice that $T$ does not depend on the initial condition $u_0$.
By Theorem~\ref{thm:eu1}, we get a local solution on $[0, T]$.
Taking $u_T$ as a new initial condition, we obtain a solution to~(\ref{eq:bspde}) on $[T, 2T]$ by Theorem~\ref{thm:eu1} again.
Continuing this process, we conclude a solution $u$ to~(\ref{eq:bspde}) on $[0, 1]$ after finite steps.

{\bf (Uniqueness).} Let $\tilde u\in \mathcal{C}\Big([0, 1],\mathcal{C}^{\beta}([0,2\pi],\RR^d)\Big)$
be another required one. Define
$$\sigma:=\sup\Big\{t\ge 0 \mid u_t=\tilde u_t \text{ on }[0, 1] \Big\}.$$
Since $u$ and $\tilde u$ are continuous, we have $u_{\sigma}=\tilde u_{\sigma}$. Suppose for a contradiction that $\sigma <1$. Take $\varepsilon>0$ to be small enough such that $\varepsilon<T$ and $\sigma+\varepsilon<1$. Then
by the definition of $\sigma$,
\begin{equation}
u_{\sigma+{\varepsilon\over 2}}\ne \tilde u_{\sigma+{\varepsilon\over 2}}.
\mlabel{eq:solu13}
\end{equation}
By Theorem~\ref{thm:eu1}, there is a unique $u\in \mathcal{C}([\sigma, \sigma+\varepsilon],\mathcal{C}^{\beta})$ such that~(\ref{eq:bspde}) holds, contradicting~(\ref{eq:solu13}). Hence $\sigma=1$ and so $u=\tilde u$ on $[0, 1]$.
\end{proof}

When we take $\beta \in (\frac{1}{3},\frac{1}{2}) $, we can obtain the conclusion in~\cite{Hair11}.
\begin{coro}
Let $\beta \in (\frac{1}{3},\frac{1}{2})$ and $u_0\in \mathcal{C}^{\beta}([0,2\pi],\RR^d)$.
Then, for almost every realization of the driving process $H$, there exists $T>0$ such that equation~(\ref{eq:bspde}) has a unique mild solution taking values in $\mathcal{C}\Big([0, T],\mathcal{C}^{\beta}([0,2\pi],\RR^d)\Big)$. If furthermore $g$ is bounded and all derivatives of $f$ and $g$ are bounded, then this solution is global (i.e., one can choose T arbitrary, independently of $H$).
\mlabel{cor:eu}
\end{coro}

\vskip 0.2in

\noindent
{\bf Acknowledgments.} This work is supported by the National Natural Science Foundation of China (No.~12571019), the Natural Science Foundation of Gansu Province (No.~25JRRA644) and Innovative Fundamental Research Group Project of Gansu Province (No.~23JRRA684). The second author is grateful to Laboratoire de Math\'ematiques Blaise Pascal at  Universit\'e Clermont Auvergne for warm hospitality.

\noindent
{\bf Declaration of interests. } The authors have no conflicts of interest to disclose.

\noindent
{\bf Data availability. } Data sharing is not applicable as no new data were created or analyzed.

\vspace{-.3cm}

\end{document}